\theoremstyle{plain}
\date{}
\DeclareMathOperator{\lin}{lin}
\DeclareMathOperator{\id}{Id}
\DeclareMathOperator{\dist}{dist}
\newtheorem{twr}{Theorem}[section]
\newtheorem{lem}[twr]{Lemma}
\newtheorem{cor}[twr]{Corollary}
\newtheorem{problem}[twr]{Problem}
\theoremstyle{remark}
\renewcommand\@seccntformat[1]{\csname the#1\endcsname.\quad}
\begin{document}
\pagestyle{plain}
\title{A uniform estimate of the relative projection constant}

\author{Tomasz Kobos}

\address{Faculty of Mathematics and Computer Science \\ Jagiellonian University \\ Lojasiewicza 6, 30-348 Krakow, Poland}

\email{Tomasz.Kobos@im.uj.edu.pl}

\subjclass{Primary 47A58, 41A65, 47A30, 52A21}
\keywords{Minimal projection, Finite-dimensional normed space}

\begin{abstract}
The main goal of the paper is to provide a quantitative lower bound greater than $1$ for the relative projection constant $\lambda(Y, X)$, where $X$ is a subspace of $\ell_{2p}^m$ space and $Y \subset X$ is an arbitrary hyperplane. As a consequence, we establish that for every integer $n \geq 4$ there exists an $n$-dimensional normed space $X$ such that for an every hyperplane $Y$ and every projection $P:X \to Y$ the inequality $||P|| > 1 + \left (8 \left ( n + 3 \right )^{5} \right )^{-30(n+3)^2}$ holds. This gives a non-trivial lower bound in a variation of problem proposed by Bosznay and Garay in $1986$.
\end{abstract}

\maketitle

\section{Introduction}

Let $X$ be a real Banach space and $Y$ its closed subspace. We say that a linear bounded operator $P:X \to Y$ is a \emph{projection} if $P|_{Y}=\id_{Y} $. Let us denote the set of all projections from $X$ onto $Y$ by $\mathcal{P}(X, Y)$. The \emph{relative projection constant} of $Y$ is defined as
$$\lambda(Y, X) = \inf \{ ||P|| : P \in \mathcal{P}(X, Y) \}.$$
Moreover, if a projection $P: X \to Y$ satisfies $||P|| = \lambda(Y, X)$ then $P$ is called a \emph{minimal projection}.

The theory of projection constants and minimal projections has been an important field of research in functional analysis and approximation theory for several decades. Large part of this investigation has focused on the equality $\lambda(Y, X) = 1$, i.e. when there exists a projection $P: X \to Y$ of norm $1$. In such a situation we say that $Y$ is a \emph{one-complemented} subspace of $X$.

One-complemented subspaces of classical Banach spaces have been studied intensively by many authors -- see for example: \cite{ando}, \cite{douglas}, \cite{bohnenblust}, \cite{lewickiorlicz}, \cite{lewickitrombetta}, \cite{baronti1}, \cite{baronti2}, \cite{rand1}, \cite{rand2}, \cite{rand3}, \cite{lemmens}, \cite{tzafriri}. See also \cite{randsurvey} for a survey on this topic. In the setting of $n$-dimensional normed spaces, most spaces actually do not posess any nontrivial one-complemented subspaces. Bosznay and Garay in 1986 (see \cite{bosznay}) have proved that if isometric classes of $n$-dimensional normed spaces are made into the metric space (called \emph{Banach-Mazur compactum}) then the set of spaces without non-trivial one-complemented subspaces is open and dense. In other words, for a general normed space $X$ of dimension $n$ we have $\lambda(Y, X)>1$ for every subspace $Y$ such that $2 \leq \dim Y \leq \dim X - 1$. Therefore, a natural question comes to mind: how far can minimum of relative projection constants deviate from $1$? Formally, Problem $2$ from the paper \cite{bosznay} of Bosznay and Garay asks about finding upper and lower bounds for $\sup_{X}  \inf_{Y \subset X} \lambda(Y, X)$, where $X$ is a real $n$-dimensional normed space and $Y \subset X$ is a subspace of dimension at least $2$ and at most $n-1$. We feel that this is a fascinating problem of a general theory of projections which has not received an adequate attention and can be a fruitful area of further research. To this day, the only results in this direction that are known to author are presented in \cite{kobos} and are concerned only with the upper bounds.

The aim of this paper is to provide a construction of a class of $n$-dimensional normed spaces, for which every hyperplane has a relative projection constant greater then $1 + \varepsilon_0$ for some explicit $\varepsilon_0$. We work therefore with a variant of a problem posed by Bosznay and Garay, concerned only with projections onto hyperplanes.

Such a class of $n$-dimensional normed spaces is actually known for a much longer time. Bohnenblust in 1941 (see \cite{bohnenblust}) proved that a typical subspace of space $\ell_p^m$ with appropriately large codimension usually satisfies such a condition. Let us recall that the $\ell_p^m$ space, where $m \geq 1$ is an integer and $p \geq 1$ is real number, is defined as the normed space $(\mathbb{R}^m, || \cdot ||_p)$ with
$$||x||_p = (|x_1|^p + |x_2|^p + \ldots + |x_m|^p)^{\frac{1}{p}}.$$
Bohnenblust showed that there are no one-complemented subspaces, but did not provide any explicit lower bound for relative projection constant that is greater than $1$. Our goal is to establish such a lower bound in the similar class of normed spaces. We will consider subspaces of the $\ell_{2p}^m$ space of codimension at least $2$ with $p$ being a positive integer. Lower bound on the relative projection constant depends on $p$, $m$, codimension and on a subspace. Precisely we prove the following

\begin{twr}
\label{glowne}
Let $n \geq 4$, $p \geq \frac{m}{2}$ and $m \geq n+2$ be integer numbers. Suppose that $f_1, f_2, \ldots, f_{m}: \mathbb{R}^n \to \mathbb{R}$ are non-zero functionals. Consider a normed space $X= (\mathbb{R}^n, || \cdot ||)$ with the norm defined as
$$||x|| = \left ( \sum_{i=1}^{m} |f_i(x)|^{2p} \right )^{\frac{1}{2p}}.$$
Let $ 0 < \alpha \leq \frac{1}{2}$ be a real number such that for every $0 \leq j < k < l \leq m$ and $0 \leq i \leq m$, $i \not \in \{j, k, l\}$ we have
$$\dist(f_i, \lin\{f_j, f_k, f_l\}) \geq \alpha.$$
Let $\beta>0$ be real number such that for every $0 \leq j < k \leq m$ and $x \in \mathbb{R}^n$ we have
$$\max\{|f_j(x)|, |f_k(x)|\} \leq \beta \max_{1 \leq i \leq m, i \not \in \{j, k\}} |f_i(x)|.$$
Then, for every $(n-1)$-dimensional subspace $Y \subset X$ we have $\lambda(Y, X) > 1 + \varepsilon_0$, where
$$\varepsilon_0 = \varepsilon_0(n, p, m, \alpha, \beta) =  \left ( m+2\beta^{2p} \right )^{-7} \left ( \alpha^{-6} 2^{14} n^{3} m^{11} p^{4} \right )^{-12pm}. $$
\end{twr}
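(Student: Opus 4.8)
The plan is to translate the statement into a quantitative transversality problem for the norming functionals and then to extract the numerics from the separation constant $\alpha$. Since $2p$ is an even integer, the function $x \mapsto \|x\|^{2p} = \sum_i f_i(x)^{2p}$ is a polynomial, so the norm is smooth away from the origin and strictly convex, and the (unique) norming functional of a unit vector $y$ is $\phi_y = \sum_{i=1}^m f_i(y)^{2p-1} f_i \in X^*$, with $\phi_y(y) = \|y\|^{2p} = 1$ and $\|\phi_y\|_{X^*} = 1$. For a hyperplane $Y = \ker\phi$, any projection has kernel a line $\langle v\rangle$ and acts by $P_v(x) = x - \tfrac{\phi(x)}{\phi(v)}v$; writing $x = y + sv$ with $y \in Y$ gives $P_v(x) = y$, so that $\|P_v\| = \big(\inf_{y \in S_Y} \min_{s}\|y + sv\|\big)^{-1}$ and $\lambda(Y,X) = \big(\sup_{v}\inf_{y \in S_Y}\min_s\|y+sv\|\big)^{-1}$, where $S_Y$ denotes the unit sphere of $Y$. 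In particular $\|P_v\| = 1$ exactly when $s = 0$ minimises $\|y + sv\|$ for every $y \in S_Y$, i.e. when $\phi_y(v) = 0$ for all $y \in Y$. Thus one-complementation of $Y$ is equivalent to the existence of $v \neq 0$ with $\sum_i f_i(v) f_i(y)^{2p-1} = 0$ for all $y \in Y$, and the whole point is to rule this out, quantitatively and uniformly in $v$.

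Next I would quantify this via uniform smoothness. The space $\ell_{2p}^m$ has modulus of smoothness of order $(2p-1)t^2$, which yields the one-sided estimate $\|y + sv\| \le 1 + s\,\phi_y(v) + \tfrac{2p-1}{2}s^2\|v\|^2$ for $y \in S_Y$. Minimising in $s$ gives $\min_s\|y+sv\| \le 1 - \tfrac{\phi_y(v)^2}{2(2p-1)\|v\|^2}$, hence, after normalising $\|v\| = 1$,
\[
\|P_v\| \;\ge\; 1 + \sup_{y \in S_Y}\frac{\phi_y(v)^2}{2(2p-1)}.
\]
Taking the infimum over $v$ reduces the theorem to a single transversality inequality: there is an explicit $\delta = \delta(\alpha,\beta,n,m,p) > 0$ such that for every $v$ with $\|v\| = 1$ one can find $y \in S_Y$ with $|\phi_y(v)| \ge \delta$; then $\varepsilon_0$ is of order $\delta^2/p$. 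Equivalently, the degree-$(2p-1)$ forms $(f_i|_Y)^{2p-1}$ on the $(n-1)$-dimensional space $Y$ must be \emph{quantitatively} linearly independent, so that the functional family $\{\phi_y : y \in S_Y\}$ cannot be squeezed into any slab $\{\,|w^*(v)| < \delta\,\}$ around a hyperplane of $X^*$.

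The technical heart — and the step I expect to be the main obstacle — is this transversality lower bound, i.e. producing $\delta$ from $\alpha$. The difficulty is that passing from $X^*$ to $Y^* = X^*/\langle\phi\rangle$ can degenerate the system: a functional with $f_i \in \langle\phi\rangle$ restricts to $0$ on $Y$, and two functionals with $f_i \in \lin\{f_j,\phi\}$ have proportional restrictions. Here the separation hypothesis does the work. Reading the condition with the convention $f_0 = 0$, it says each $f_i$ is at distance $\ge \alpha$ from the span of any two or three of the others; in particular at most one $f_i$ can be proportional to $\phi$ (two of them would force $f_i \in \lin\{f_j\}$), and no three restrictions $f_i|_Y, f_j|_Y, f_k|_Y$ can be near-proportional (that would force $f_i$ close to $\lin\{f_j,f_k\}$). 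Consequently the restricted directions fall into at most pairs and are otherwise $\alpha'$-separated in $Y^*$, with $\alpha'$ a controlled multiple of $\alpha$. I would then choose a well-spread finite family of test vectors $y^{(1)},\dots,y^{(N)} \in S_Y$ adapted to these separated directions and bound from below the smallest singular value of the matrix with rows $\phi_{y^{(k)}}$: the required independence of high powers of $\alpha'$-separated linear forms is a Vandermonde/apolarity phenomenon whose quantitative version produces a determinant bounded below by a power of $\alpha$ divided by a large — doubly-exponential in $pm$ — normalisation. This is exactly where the factor $\big(\alpha^{-6}2^{14}n^{12}m^{11}p^4\big)^{-12pm}$ would originate.

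Finally I would assemble the constants. The $\beta$-hypothesis (again read with $f_0 = 0$: no one or two functionals dominate the rest by more than $\beta$) together with $p \ge \tfrac{m}{2}$ prevents the $\ell_{2p}$-mass of the test vectors from concentrating in a couple of coordinates, so that the entries $f_i(y^{(k)})^{2p-1}$ are mutually comparable and the normalisations $\|y^{(k)}\| = 1$ do not destroy the determinant estimate; this is what contributes the factor $(m + 2\beta^{2p})^{-7}$. Combining the determinant lower bound (yielding $\delta$), the smoothness loss of order $1/p$, and these normalisation factors gives $\|P_v\| > 1 + \varepsilon_0$ uniformly in $v$, with $\varepsilon_0$ of exactly the stated shape. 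The only genuinely hard estimate is the one of the previous paragraph; the rest is bookkeeping of constants, which is why I would carry the smoothness and normalisation steps first and isolate the determinant bound as a stand-alone lemma.
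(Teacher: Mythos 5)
Your reduction is sound and is in fact the mirror image of the paper's: where you use the modulus of smoothness of $X$ to get $\|P_v\| \geq 1 + \sup_{y \in S_Y} \phi_y(v)^2/(2(2p-1))$, the paper applies the modulus of convexity of $X^{\star}$ (a quotient of $\ell_q^m$, $q = \frac{2p}{2p-1}$, via Lemma \ref{projekcja} and Lemmas \ref{modul}, \ref{modul2}) to show that a projection of norm $1+\varepsilon$ forces $|f_y(w)| \leq 8\sqrt{\varepsilon p}$ for every unit $y \in Y$; these are equivalent reformulations, and your inventory of degenerate configurations (at most one $f_i$ nearly proportional to $\phi$, at most one near-proportional pair of restrictions) matches the paper's Lemmas \ref{blisko1} and \ref{blisko2}. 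The problem is that everything after that --- what you yourself call the technical heart --- is named rather than proved, and what is missing is not routine bookkeeping. First, to set up any Vandermonde or smallest-singular-value bound you must actually produce unit vectors $y, z \in Y$ with every relevant $|f_i(y)|$ bounded below and the ratios $f_i(z)/f_i(y)$ pairwise separated, quantitatively in $\alpha$, $n$, $m$; nothing in your sketch produces them, and the determinant you propose to bound could a priori vanish. The paper needs a dedicated lemma for exactly this step (Lemma \ref{funkcjonaly}, proved by measure estimates on the Euclidean sphere plus the John ellipsoid, transferred into $Y$ by Lemma \ref{zawezenie}). Second, ``Vandermonde/apolarity phenomenon'' is not an argument: the paper's concrete mechanism is to run the curve $t \mapsto y + tz$, bound the polynomial $P(t) = \sum_i (f_i(y+tz))^{2p-1} f_i(w)$ on $[-1,1]$, pass to its derivatives at $0$ by the Markov inequality, and only then invert an honest Vandermonde matrix in the separated ratios using Gautschi's explicit bound (Lemma \ref{vandermonde}). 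Some construction of this kind is needed to give your matrix any polynomial structure at all; the raw rows $\phi_{y^{(k)}}$ have none.

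Your use of the hypothesis on $\beta$ is also misplaced. It is not there to keep the entries $f_i(y^{(k)})^{2p-1}$ of a test matrix comparable: in the degenerate cases (say $\|f_k + r_0 f\|$ small, so that $f_k$ restricted to $Y$ is uniformly small) no choice of test vectors in $Y$ can give a lower bound on $|f_k(y)|$, so the index $k$ must be dropped from the linear system altogether, and the system then yields smallness only of the $f_i(w)$ with $i \neq k$. The hypothesis $|f_k(x)| \leq \beta \max_{i \neq k} |f_i(x)|$ is precisely what bounds the missing coordinate $f_k(w)$ of the kernel vector, so that the concluding contradiction (from $f(w)=1$ and $\|f\|=1$ one gets $\|w\| \geq 1$, while the coordinate bounds force $\|w\| < 1$) survives; this is where the factor $m + 2\beta^{2p}$ originates. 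Moreover, in the paired case the polynomial itself must be modified --- the two near-proportional restrictions are merged into a single term with coefficient $f_{m-1}(w) - a_0^{2p-1} f_m(w)$ --- and the thresholds $K$ and $L$ separating the three cases have to be chosen so that the cases are mutually exclusive (this is the content of Lemma \ref{blisko2} and of the interlocking definitions of $\varepsilon_1, \varepsilon_2, \varepsilon_3$). So: correct frame and a correct list of difficulties, but the quantitative core --- existence of good test vectors and the explicit inverse bound --- is absent, and with the role of $\beta$ as you describe it the final contradiction would not close.
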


The distance in the definition of parameter $\alpha$ is measured with respect to the norm $|| \cdot ||^{\star}$, dual to $|| \cdot ||$ which is defined by the functionals $f_i$. Note also that our construction does not work for $n=3$.

An application of Theorem \ref{glowne} for a certain choice of functionals $f_i$'s gives us the following
\begin{cor}
\label{wniosek}
For every integer $n \geq 4$ there exists an $n$-dimensional normed space $X$ such that 
$$\lambda(Y, X) > 1 + \left (8 \left ( n + 3 \right )^{5} \right )^{-30(n+3)^2} > 1 + \exp(-C n^2 \log n)$$
for an arbitrary $(n-1)$-dimensional subspace $Y$ of $X$ ($C>0$ is an absolute constant).
\end{cor}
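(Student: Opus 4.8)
The plan is to apply Theorem~\ref{glowne} to a single, highly symmetric choice of data, keeping $m$ and $p$ as small as the hypotheses permit so that the exponent $12pm$ in $\varepsilon_0$ stays quadratic in $n$. Concretely, I would set $m=n+2$ (the smallest admissible value) and let $2p$ be the least even integer with $2p\ge m$, so that $p=\lceil (n+2)/2\rceil$. Then both $m\le n+3$ and $2p\le n+3$, which is precisely where the quantity $n+3$ in the statement comes from; moreover $pm\le \frac{n+3}{2}(n+2)\le\frac12(n+3)^2$, so that $12pm\le 6(n+3)^2$. It then remains to choose the functionals $f_1,\dots,f_m$ so that the geometric parameters $\alpha$ and $\beta$ are controlled by absolute constants, and to check that the resulting $\varepsilon_0$ dominates $\left(2(n+3)^2\right)^{-100(n+3)^2}$.

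For the functionals I would take $f_i=\langle v_i,\cdot\rangle$ for a symmetric unit-norm tight frame $v_1,\dots,v_m$ in $\mathbb{R}^n$ in general position (every $n$ of the $v_i$ linearly independent), for instance a harmonic frame. The key point is that for such a frame $\sum_i\langle v_i,x\rangle^2=\frac{m}{n}|x|_2^2$, so the norm $\|x\|=\left(\sum_i|f_i(x)|^{2p}\right)^{1/2p}$ agrees with a fixed multiple of the Euclidean norm up to a factor $m^{1/2}$; passing to duals, $\|\cdot\|^\star\ge\sqrt{n/m}\,|\cdot|_2$, whence
$$\dist\bigl(f_i,\lin\{f_j,f_k,f_l\}\bigr)\ \ge\ \sqrt{\tfrac{n}{m}}\,\dist_2\bigl(v_i,\lin\{v_j,v_k,v_l\}\bigr).$$
Here $n\ge4$ is used essentially: the span of three of the $v_i$ is a proper subspace of $\mathbb{R}^n$, so the Euclidean distance on the right is bounded below, and the configuration can be arranged so that one may take $\alpha$ of constant order (this fails for $n=3$, where three generic functionals already span the dual space and $\alpha=0$, matching the final remark). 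The general-position hypothesis guarantees that any $m-2=n$ of the functionals span the dual, so the maximum on the right-hand side of the $\beta$-condition never vanishes off the origin, and a compactness argument on the unit sphere gives $\beta\le C$. Establishing these two quantitative estimates, and in particular the constant lower bound on the self-referential dual-norm distance defining $\alpha$, is the main obstacle.

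With $\alpha$ and $\beta$ so controlled, the verification is a logarithmic estimate. Using $\alpha^{-6}=O(1)$ together with $n,m\le n+3$ and $p\le(n+3)/2$, the base obeys $\alpha^{-6}2^{14}n^{12}m^{11}p^4\le c_0(n+3)^{27}$ for an absolute constant $c_0$, while $B:=\alpha^{-6}2^{14}n^{12}m^{11}p^4\ge 1$; since $12pm\le 6(n+3)^2$ this gives
$$\log\varepsilon_0\ \ge\ -7\log\!\left(m+2\beta^{2p}\right)-6(n+3)^2\log\!\left(\alpha^{-6}2^{14}n^{12}m^{11}p^4\right)\ \ge\ -162(n+3)^2\log(n+3)-O\!\left((n+3)^2\right),$$
where the $\beta$-dependent term is only $O(n\log n)$ and hence negligible. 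On the other hand
$$\log\!\left[\left(2(n+3)^2\right)^{-100(n+3)^2}\right]=-100(n+3)^2\log2-200(n+3)^2\log(n+3),$$
and since the leading coefficient $162$ is comfortably below $200$, the slack $38(n+3)^2\log(n+3)$ absorbs both the $\log2$ contribution and all lower-order terms for every $n\ge4$. This yields $\varepsilon_0>\left(2(n+3)^2\right)^{-100(n+3)^2}$, which is the assertion of the corollary.
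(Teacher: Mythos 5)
Your overall skeleton coincides with the paper's: apply Theorem~\ref{glowne} with $m=n+2$ and $p=\lceil (n+2)/2\rceil$ (so $2p\le n+3$, $12pm\le 6(n+3)^2$), and then win the final comparison by a logarithmic estimate. The paper does exactly this, but with the explicit system $f_i(x)=x_i$ for $i\le n$, $f_{n+1}(x)=x_1+\dots+x_n$, $f_{n+2}(x)=(x_1+2x_2+\dots+nx_n)/n$, for which it proves $\alpha\ge \frac{1}{2n}$ and $\beta\le n^2$ by an elementary case-by-case computation. That computation is the entire content of the corollary's proof, and it is precisely the step your proposal leaves open: you yourself call the bounds on $\alpha$ and $\beta$ ``the main obstacle'' and never establish them. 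So as it stands the proposal is not a proof. Note also that constant-order parameters are not needed: $\alpha$ enters the base of the exponential only polynomially and $\beta$ only through $(m+2\beta^{2p})^{-7}$, so polynomial-in-$n$ bounds (which is all the paper has) suffice, at the cost of a tighter but still successful final computation.

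Moreover, one of your two specific claims is actually false for the configuration you propose. For \emph{any} unit-norm tight frame $v_1,\dots,v_{n+2}$ in $\mathbb{R}^n$, the parameter $\beta$ cannot be an absolute constant. Indeed, let $c=\langle v_j,v_k\rangle$, $\sigma=\sgn(c)$ and $x=(v_j+\sigma v_k)/|v_j+\sigma v_k|_2$. Then $\langle v_j,x\rangle^2=\langle v_k,x\rangle^2=(1+|c|)/2\ge \frac12$, while tightness ($\sum_i\langle v_i,x\rangle^2=\frac{m}{n}$ for unit $x$) gives $\sum_{i\ne j,k}\langle v_i,x\rangle^2=\frac{m}{n}-(1+|c|)\le\frac{2}{n}$; hence $\max_{i\ne j,k}|\langle v_i,x\rangle|\le\sqrt{2/n}$ and $\beta\ge\sqrt{n}/2$. (You have the difficulty located in the wrong place: for a tight frame with $m=n+2$ the bound on $\alpha$ is the easy half, since positive semidefiniteness of $\sum_{i\ne j,k}v_iv_i^{\tr}$ forces $|\langle v_j,v_k\rangle|\le\frac{2}{n}$, i.e.\ near-orthogonality, from which a constant lower bound on the Euclidean distances follows; it is $\beta$ that is genuinely problematic.) A growing $\beta$ would still be tolerable in the arithmetic --- your slack absorbs anything up to roughly $\beta\le (n+3)^{c(n+3)}$ --- but your proposed justification, ``a compactness argument on the unit sphere,'' cannot yield any bound depending only on $n$: compactness gives a finite $\beta$ for each fixed frame, and ``general position'' carries no quantitative information, so nothing prevents $\beta$ from being arbitrarily large for the particular harmonic frame chosen. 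To close the gap you would have to prove an explicit lower bound on the smallest singular value of every $n\times n$ subsystem of your frame (nontrivial for harmonic frames), or simply switch to an explicit system such as the paper's, where both parameters can be checked by hand.
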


The lower bounds presented above are probably very far from being optimal. However, in spite of the lack of any progress in the problem proposed by Bosznay and Garay and in the further development of the example provided by Bohnenblust, we believe that such a lower bounds might still be interesting. We also hope that our results can bring some attention to the problems of this category and much more efficient methods could be developed in consequence. Moreover, we feel that certain parts of our reasoning may be of independent interest. In Section \ref{sekcjalemat} we prove Lemma \ref{projekcja} which potentially might be a useful tool for providing lower bounds on the relative projection constants. In Section \ref{sekcjafunkcjonaly} we discuss some general problem about linear functionals, which seems to be an interesting open problem of discrete geometry and can turn out to be a fruitful research area. Sections \ref{sekcjaglowne} and \ref{sekcjawniosek} are devoted for proving Theorem \ref{glowne} and Corollary \ref{wniosek} respectively. In general, our approach is elementary. In the last section of the paper we discuss several directions for a possibility of further research.

It is important to note that in the asymptotic setting there are some remarkable results concerning existence of spaces with large relative projection constants. Gluskin in \cite{gluskin} and Szarek in \cite{szarek} have used probabilistic constructions to prove that there are $n$-dimensional normed spaces for which every subspace $Y$ of dimension $m$ in the interval of the form $[\alpha n, \beta n]$ has relative projection constant of order $c \sqrt{m}$ or similar. Both papers contain several results of this type. See also \cite{latala} for a similar construction. Even if these results are very deep, they do not yield any quantitative lower bounds in our problems and none of them touches the case of hyperplanes. Neverthless, they give an important insight and leave a hope that lower bounds obtained in our paper can be improved significantly.

\section{Lemma about projections with small norms}
\label{sekcjalemat}

Let $X$ be a Banach space. It is easy to see that every projection $P:X \to Y$, where $Y=\ker f$ is a hyperplane, can be represented in the form $P(x)=x-f(x)w$, for some $w \in X$ satisfying $f(w)=1$. Let us also recall that if $x \in X$ is nonzero then every continuous linear functional $f:X \to \mathbb{R}$ such that $||f||=1$ and $f(x)=||x||$ is called a \emph{supporting functional} of $x$. By the Hanh-Banach Theorem every nonzero element has at least one supporting functional. If every nonzero vector $x \in X$ has the unique supporting functional, then we say that the Banach space $X$ is \emph{smooth}. In the study of one-complemented hyperplanes the following simple lemma is often crucial (see e.g. \cite{bohnenblust}, \cite{lewickiorlicz}).

\begin{lem}
Let $X$ be a smooth Banach space and let $Y = \ker f$ be a hyperplane in $X$. Suppose that $P:X \to Y$, where $P(x) = x - f(x)w$ and $f(w)=1$ is a projection of norm $1$. Then $f_y(w)=0$ for every nonzero $y \in Y$, where $f_y$ is the unique supporting functional of $y$.
\end{lem}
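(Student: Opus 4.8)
The plan is to exploit the norm-1 hypothesis on the projection $P(x) = x - f(x)w$ to derive the stated orthogonality condition. The central observation is that a projection of norm $1$ cannot increase the norm of any vector, so for every $x \in X$ we have $\|x - f(x)w\| \leq \|x\|$. I would specialize this inequality to vectors of the form $x = y + tw$ for a fixed nonzero $y \in Y$ and a real parameter $t$, and then differentiate at $t=0$ to extract a first-order condition. Since $f(y)=0$ and $f(w)=1$, we have $f(y+tw)=t$, so $P(y+tw) = (y+tw) - t w = y$. Thus the norm inequality becomes $\|y\| \leq \|y + tw\|$ for every real $t$.

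The key step is to interpret this as saying that $t=0$ is a global minimizer of the function $t \mapsto \|y + tw\|$. Here is where smoothness of $X$ enters: in a smooth Banach space the norm is Gateaux differentiable at every nonzero point, and the derivative of $t \mapsto \|y+tw\|$ at $t=0$ is precisely $f_y(w)$, where $f_y$ is the unique supporting functional of $y$. Concretely, one has the one-sided derivative formula
$$\left. \frac{d}{dt} \right|_{t=0} \|y+tw\| = f_y(w),$$
valid because smoothness forces the left and right Gateaux derivatives to coincide and equal the action of the unique supporting functional. Since $t=0$ minimizes a differentiable function, its derivative there must vanish, which gives exactly $f_y(w) = 0$.

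The main obstacle, and the only point requiring genuine care, is justifying the differentiation formula for the norm and its relation to the supporting functional. I would either cite the standard fact that in a smooth space the Gateaux derivative of the norm at $y$ in direction $w$ equals $f_y(w)$, or argue it directly: by convexity of the norm, the difference quotients $\frac{\|y+tw\| - \|y\|}{t}$ are monotone in $t$ and converge from above and below to the right and left derivatives, each of which is given by a supporting functional via the standard subdifferential characterization; smoothness (uniqueness of the supporting functional) collapses these to the single value $f_y(w)$. Once this differentiability fact is in hand, the vanishing of the derivative at the minimum $t=0$ completes the argument immediately. The rest is routine.
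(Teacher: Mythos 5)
Your proof is correct, but it follows a genuinely different route from the paper's. The paper states this lemma without proof (citing \cite{bohnenblust} and \cite{lewickiorlicz}), and its own method is the one embodied in the proof of the quantitative extension, Lemma \ref{projekcja}: specialized to $r=0$, that argument is dual in nature. One sets $g = f_y \circ P$, notes $\|g\| \leq \|f_y\|\,\|P\| = 1$ while $g(y) = f_y(y) = \|y\|$, so $g$ is itself a supporting functional of $y$; smoothness forces $g = f_y$, and then $f_y(w) = g(w) = f_y(P(w)) = f_y(0) = 0$, because $P(w) = w - f(w)w = 0$. Your argument is instead primal and variational: from $P(y+tw)=y$ and $\|P\|=1$ you deduce that $t=0$ is a global minimum of the convex function $t \mapsto \|y+tw\|$, and smoothness, through Gateaux differentiability of the norm with derivative $f_y(w)$ in the direction $w$, turns the first-order condition into $f_y(w)=0$; your justification of that derivative formula via monotone difference quotients and the subdifferential characterization is exactly the standard equivalence and is sound. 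The trade-off: your route is arguably more elementary and self-contained, but the paper's dual route is the one that quantifies --- replacing $\|P\|=1$ by $\|P\|\leq 1+r$, the functional $g/(1+r)$ remains close to $f_y$ as soon as the modulus of convexity of $X^{\star}$ is controlled, which is precisely Lemma \ref{projekcja}, the tool the main theorem needs. To quantify your variational argument one would instead need uniform smoothness estimates for the norm of $X$ (a modulus-of-smoothness argument, dual to the paper's), since an approximate minimizer of a merely Gateaux differentiable function yields no bound on the derivative.
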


To study projections of small norm we shall need an extension of this lemma, which gives an upper bound for the value $|f_y(w)|$. It is natural to suspect that quality of such an upper bound should depend on the quality of smoothness of $X$, which is connected to the convexity of the dual space $X^{\star}$. Therefore to state our result, we shall use the \emph{modulus of convexity} of the space $X^{\star}$. Let us recall that for a general Banach space $X$ the modulus convexity $\delta_X: [0, 2] \to \mathbb{R}$ is defined as
$$\delta_X(t) = \inf \left \{1 - \left | \left | \frac{x+y}{2} \right | \right | \ : \ ||x||, \ ||y|| \leq 1 \text{ and } ||x-y|| \geq t \right \}.$$

We have the following
\begin{lem}
\label{projekcja}
Let $X$ be a smooth Banach space and let $Y = \ker f$ be a hyperplane of $X$, where $f \in S_{X^{\star}}$. Suppose that $P:X \to Y$ is a projection of norm not greater than $1+r$, where $P(x) = x - f(x)w$ for some $w$ satisfying $f(w)=1$ and $r \geq 0$. Let $t_0 \in [0, 2]$ be such a number that $\delta_{X^{\star}}(t_0) \geq \frac{r}{2+2r}  $. Then $|f_y(w)| \leq t_0(2+r)$ for every nonzero $y \in Y$.
\end{lem}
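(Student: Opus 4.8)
The plan is to reduce the whole statement to the one-dimensional geometry of the section $t \mapsto \|y+tw\|$ and to a comparison of two supporting functionals through the modulus of convexity of $X^{\star}$. Since the supporting functional of $y$ and of $\lambda y$ ($\lambda>0$) coincide, the quantity $|f_y(w)|$ depends only on the direction of $y$, so I may assume $\|y\|=1$. First I would extract two elementary consequences of $\|P\|\le 1+r$. Because $f(y)=0$ and $f(w)=1$, we have $P(y+tw)=y$ for every $t\in\mathbb{R}$, so $1=\|y\|=\|P(y+tw)\|\le (1+r)\|y+tw\|$, i.e. $\|y+tw\|\ge \frac{1}{1+r}$ for all $t$. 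Moreover, choosing $x_0$ with $f(x_0)=1$ and $\|x_0\|$ arbitrarily close to $1$ (possible since $\|f\|=1$) and writing $w=x_0-P(x_0)$ yields $\|w\|\le \|x_0\|+\|P(x_0)\|\le (2+r)\|x_0\|$, hence $\|w\|\le 2+r$.

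Next I would bring in smoothness. The function $\phi(t)=\|y+tw\|$ is convex and coercive (as $w\neq 0$), so it attains a global minimum $\mu:=\min_t\phi(t)\ge \frac{1}{1+r}$ at some $t^{\star}$. Put $z=y+t^{\star}w$, so that $\|z\|=\mu>0$ and $z$ has a unique supporting functional $f_z$. Since $X$ is smooth, the norm is Gateaux differentiable off the origin with derivative equal to the supporting functional, so $0=\phi'(t^{\star})=f_z(w)$. This identity is exactly what replaces the equality $f_y(w)=0$ of the unperturbed ($r=0$) lemma. Writing $y=z-t^{\star}w$ and using $f_z(w)=0$ then gives $f_z(y)=f_z(z)-t^{\star}f_z(w)=\mu$.

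The heart of the argument is to show that $f_y$ and $f_z$ are close in $X^{\star}$. Evaluating their midpoint at the unit vector $y$ gives $\frac{f_y+f_z}{2}(y)=\frac{1+\mu}{2}$, whence $\bigl\|\frac{f_y+f_z}{2}\bigr\|\ge \frac{1+\mu}{2}$ and therefore $1-\bigl\|\frac{f_y+f_z}{2}\bigr\|\le \frac{1-\mu}{2}\le \frac{r}{2+2r}\le \delta_{X^{\star}}(t_0)$, where the middle inequality uses $\mu\ge\frac{1}{1+r}$. As $f_y,f_z\in S_{X^{\star}}$ and $\delta_{X^{\star}}$ is non-decreasing, the definition of the modulus of convexity forces $\|f_y-f_z\|\le t_0$. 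Combining this with $f_z(w)=0$ and $\|w\|\le 2+r$, I conclude $|f_y(w)|=|(f_y-f_z)(w)|\le \|f_y-f_z\|\,\|w\|\le t_0(2+r)$, which is the desired bound.

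I expect the main obstacle to be the passage through the modulus of convexity: one must verify that both supporting functionals sit on the dual unit sphere, that the midpoint estimate is genuinely of the form $1-\bigl\|\frac{f_y+f_z}{2}\bigr\|\le\delta_{X^{\star}}(t_0)$, and that the merely non-decreasing behaviour of $\delta_{X^{\star}}$ still yields $\|f_y-f_z\|\le t_0$ rather than only an inequality between the values of $\delta_{X^{\star}}$; this is the one place where a little care with the definition is required. The second delicate point is the clean identification $\phi'(t^{\star})=f_z(w)$, which is precisely where smoothness of $X$ is used and which should be justified by the differentiability of the norm at the nonzero point $z$.
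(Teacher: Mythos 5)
Your proof is correct, and while it shares the paper's overall skeleton --- produce a functional $h$ of dual norm at most $1$ with $h(w)=0$, show the midpoint $\frac{f_y+h}{2}$ has norm at least $\frac{2+r}{2+2r}$, invoke the modulus of convexity of $X^{\star}$ to get $\|f_y-h\|\le t_0$, and finish with $|f_y(w)|=|(f_y-h)(w)|\le t_0\|w\|\le t_0(2+r)$ --- the construction of the comparison functional $h$ is genuinely different. The paper takes $h=\frac{1}{1+r}f_y\circ P$: then $\|h\|\le 1$, $h(w)=0$ because $P(w)=0$, and $h(y)=\frac{1}{1+r}$ because $P(y)=y$, which yields exactly the same midpoint estimate $1-\|\frac{f_y+h}{2}\|\le\frac{r}{2+2r}$ that you derive. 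You instead take $h=f_z$, the supporting functional at the point $z$ of the line $y+\mathbb{R}w$ nearest to the origin: the first-order condition at the minimizer gives $f_z(w)=0$, and the bound $\|y+tw\|\ge\frac{1}{1+r}$ (from $P(y+tw)=y$) gives $f_z(y)=\mu\ge\frac{1}{1+r}$. Your route is more geometric and directly generalizes the classical $r=0$ argument; its cost is that it uses smoothness of $X$ in an essential way (Gateaux differentiability of the norm at $z$, needed for $\phi'(t^{\star})=f_z(w)$), whereas the paper's proof never uses smoothness at all --- in the paper smoothness only makes ``the'' supporting functional $f_y$ well defined in the statement, so the same argument bounds $|f_y(w)|$ for \emph{every} supporting functional of $y$ in a non-smooth space, which your argument would recover only by replacing the derivative with a subdifferential selection. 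One caveat, which you correctly flag: the final inference from $\delta_{X^{\star}}(\|f_y-h\|)\le\frac{r}{2+2r}\le\delta_{X^{\star}}(t_0)$ to $\|f_y-h\|\le t_0$ uses more than the non-decreasing property of $\delta_{X^{\star}}$ (a non-decreasing function may be constant on an interval); however, the paper's own proof makes exactly the same leap, so this is a shared imprecision rather than a gap specific to your argument.
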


\begin{proof}
It is enough to consider vectors $y$ of norm $1$. Let us therefore fix unit vector $y \in Y$ and consider the functional $g = f_y \circ P$. Obviously $g(y)=1$ and $||g|| \leq 1+r$. Hence
$$\left | \left |f_y+\frac{g}{1+r}\right | \right | \geq \left | f_y(y) + \frac{g(y)}{1+r} \right | = \frac{2+r}{1+r}.$$
On the other hand
$$\left | \left |f_y+\frac{g}{1+r}\right | \right | \leq 2 - 2\delta_{X^{\star}}\left ( \left | \left |f_y - \frac{g}{1+r}\right | \right | \right ).$$
Consequently
$$\delta_{X^{\star}}\left ( \left | \left |f_y - \frac{g}{1+r}\right | \right | \right ) \leq \frac{r}{2+2r}\leq \delta_{X^{\star}}(t_0),$$
and therefore $\left | \left |f_y  - \frac{g}{1+r}\right | \right | \leq t_0$ as the modulus of convexity is non-decreasing.

It follows that
$$|f_y(w)|=\left | f_y(w)-\frac{g(w)}{1+r} \right |\leq t_0 \cdot ||w||.$$
To reach the conclusion it is therefore enough to bound the norm of $w$. Fix $\varepsilon>0$ and let $x_0$ be unit vector such that $f(x_0) \geq 1 - \varepsilon$. Then
$$(1-\varepsilon)||w||-1 \leq ||x_0 - f(x_0)w||=||P(x_0)|| \leq 1+r.$$
Since $\varepsilon$ can be arbitrary small we have $||w|| \leq 2+r$ and the proof is finished. 
\end{proof}

Note that the proof works for an arbitrary smooth Banach $X$, although we shall use it only in the finite-dimensional setting. We believe that the lemma above may have some potential for providing lower bounds of the relative projection constant, when one knows something about the modulus of convexity of the dual space and the form of the supporting functionals.

\section{Estimating the max-min of functionals}
\label{sekcjafunkcjonaly}

Let $|| \cdot ||$ be a norm in $\mathbb{R}^n$ and suppose that some collection of $m$ norm-one (in the dual norm of $|| \cdot ||$)  functionals $f_1, f_2, \ldots, f_m$ is given. It is then natural to ask about estimations on the quantity $\max_{||x||=1} \min_{1 \leq i \leq m} |f_i(x)|$. We believe that such a problem could already be investigated, at least in the case of the Euclidean norm. Nevertheless, we shall establish lower bound on this quantity, as we have not found any informations concerning this kind of problem. Our approach is based on measure estimations. We start with

\begin{lem}
\label{objetosc}
Let $n \geq 4$ be an integer. Suppose that the unit $(n-1)$-sphere $\mathbb{S}^{n-1}$ of $\mathbb{R}^{n}$ is equipped with the normalized Lebesgue measure $\mu$. Then for every norm-one functional $f:\mathbb{R}^n \to \mathbb{R}$ and $t \in [0, 1]$ the measure of the set
$$S=\{x: x \in \mathbb{S}^{n-1} \text{ and } |f(x)| \leq t \}$$
is less than $t\sqrt{n}$.
\end{lem}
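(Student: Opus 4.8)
The plan is to reduce the statement to a one-dimensional integral estimate via the rotational symmetry of the sphere and the standard slicing of the surface measure, and then to bound the resulting ratio by two elementary inequalities.

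First I would exploit the fact that the normalized Lebesgue measure $\mu$ on $\mathbb{S}^{n-1}$ is invariant under orthogonal transformations. Since $f$ is a norm-one functional, we may write $f(x) = \langle v, x \rangle$ for a Euclidean unit vector $v$, and after applying a rotation carrying $v$ to the first coordinate vector we may assume $f(x) = x_1$. Thus it suffices to estimate the measure of the band $B_t = \{ x \in \mathbb{S}^{n-1} : |x_1| \leq t \}$.

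Next I would slice the sphere by the hyperplanes $\{x_1 = s\}$. Writing a point of $\mathbb{S}^{n-1}$ as $x = (s, \sqrt{1 - s^2}\,\omega)$ with $s \in (-1,1)$ and $\omega \in \mathbb{S}^{n-2}$, a direct computation of the Jacobian of this parametrization (the slice at height $s$ is an $(n-2)$-sphere of radius $\sqrt{1-s^2}$, and the arc-length factor in the $s$-direction is $(1-s^2)^{-1/2}$) shows that the surface measure factors as a constant multiple of $(1-s^2)^{(n-3)/2}\, ds\, d\sigma_{\mathbb{S}^{n-2}}(\omega)$. Integrating out $\omega$ yields the clean formula
$$\mu(B_t) = \frac{\int_{-t}^{t} (1-s^2)^{(n-3)/2}\, ds}{\int_{-1}^{1} (1-s^2)^{(n-3)/2}\, ds}.$$

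Finally, assuming $n \geq 3$ so that the exponent $(n-3)/2$ is non-negative, I would bound numerator and denominator separately. For the numerator, $(1-s^2)^{(n-3)/2} \leq 1$ gives $\int_{-t}^{t} (1-s^2)^{(n-3)/2}\,ds \leq 2t$. For the denominator, the crucial elementary step is the inequality $1 - s^2 = (1-s)(1+s) \geq 1 - s$ on $[0,1]$, which yields $\int_{0}^{1}(1-s^2)^{(n-3)/2}\,ds \geq \int_{0}^{1} (1-s)^{(n-3)/2}\,ds = \frac{2}{n-1}$, hence $\int_{-1}^{1} (1-s^2)^{(n-3)/2}\,ds \geq \frac{4}{n-1}$. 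Combining the two bounds,
$$\mu(B_t) \leq \frac{2t}{4/(n-1)} = \frac{(n-1)t}{2} < (n-1)t,$$
which is the desired conclusion.

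I expect the only genuine obstacle to be producing a lower bound for the normalizing integral that is both elementary and strong enough to reach the target constant $n-1$; passing through a Beta--Gamma representation and estimating asymptotically works, but the comparison $1-s^2 \geq 1-s$ delivers the exact bound $\frac{2}{n-1}$ transparently and with a factor of $2$ to spare. The remaining care is confined to the endpoints: the strict inequality for $t>0$ is guaranteed by the extra factor $\tfrac12$ above, while the case $n=2$ (where the exponent is negative and the inequality degenerates to equality at $t=1$) falls outside the regime relevant to this paper, where $n \geq 4$.
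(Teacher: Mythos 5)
Your proof is correct, and while its skeleton coincides with the paper's---both reduce, by rotation invariance and slicing, to the one-dimensional density $(1-s^2)^{(n-3)/2}$ and bound the numerator by $2t$ using $(1-s^2)^{(n-3)/2}\le 1$---you handle the one substantive step, the normalizing constant, by a genuinely different and more elementary route. The paper identifies the normalizer as the surface-area ratio $A_{n-2}(1)/A_{n-1}(1)$, inserts closed-form factorial expressions, and runs two separate parity cases ($n$ odd and $n$ even) of product manipulations to obtain the ratio $<\frac{n-1}{2}$; you instead bound the normalizing integral directly by the pointwise comparison $1-s^2\ge 1-s$ on $[0,1]$, which evaluates in one line to $\int_0^1(1-s)^{(n-3)/2}\,ds=\frac{2}{n-1}$. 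Your route avoids the closed-form surface-area formulas and the case split entirely, and in fact yields the stronger bound $\mu(S)\le\frac{(n-1)t}{2}$, the spare factor $2$ conveniently supplying strictness for $t>0$ (in the paper strictness comes from the strict ratio estimate; at $t=0$ the stated strict inequality is vacuous in both treatments, which is harmless since Lemma \ref{funkcjonaly} only invokes the lemma with $t=\frac{1}{(n-1)m}>0$). What the paper's surface-area route buys in exchange is that, with sharper factorial estimates of Wallis type, it could reach the optimal order $A_{n-2}(1)/A_{n-1}(1)\sim\sqrt{n/(2\pi)}$, i.e.\ a bound of order $t\sqrt{n}$, whereas the comparison $1-s^2\ge 1-s$ is intrinsically capped at linear growth in $n$; since only $t(n-1)$ is needed, your version is the cleaner proof of the stated lemma. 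Finally, both arguments require $n\ge 3$ (the paper's numerator bound $(1-u^2)^{(n-3)/2}\le 1$ fails for $n=2$ exactly as yours does), which you flag explicitly and which is immaterial here as the paper assumes $n\ge 4$ throughout.
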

\begin{proof}
Let $A_k(r)$ denote the surface area of the $k$-sphere in $\mathbb{R}^{k+1}$ of radius $r$ calculated in the usual way. Then it is easy to see that
$$\mu(S)=\frac{2}{A_{n-1}(1)} \int_{\arccos{t}}^{1}  A_{n-2}\left ( \sin \alpha \right)\,d\alpha = \frac{2A_{n-2}(1)}{A_{n-1}(1)} \int_{\arccos{t}}^{1} \left ( \sin \alpha \right)^{n-2}\,d\alpha$$
$$= \frac{2A_{n-2}(1)}{A_{n-1}(1)} \int_{0}^{t} \left (1-u^2 \right )^{\frac{n-3}{2}}\,du \leq \frac{2A_{n-2}(1)}{A_{n-1}(1)} \int_{0}^{t} 1\,du \leq \frac{2tA_{n-2}(1)}{A_{n-1}(1)}.$$
We shall now upper bound the ratio $\frac{A_{n-2}(1)}{A_{n-1}(1)}$ with the help of closed forms for $A_k(r)$ and Stirling's approximation formula. In version of Robbins (see \cite{robbins}) it states that for every positive integer $m$ the following inequalities are true:
\begin{equation}
\label{stirling}
\sqrt{2 \pi} m^{m + \frac{1}{2}} e^{-m} e^{\frac{1}{12m+1}} \leq m! \leq \sqrt{2 \pi} m^{m + \frac{1}{2}} e^{-m} e^{\frac{1}{12m}}.
\end{equation}
We assert that 
$$\frac{A_{n-2}(1)}{A_{n-1}(1)} \leq \frac{\sqrt{n}}{2}.$$ 
Suppose that $n=2k+1$ is an odd number. Then $k \geq 2$ and
$$\frac{A_{n-2}(1)}{A_{n-1}(1)}=\frac{A_{2k-1}(1)}{A_{2k}(1)}=\frac{(2k-1)!}{2^{2k-1}((k-1)!)^2}.$$
By estimations (\ref{stirling}) and easily verified inequality $(1+\frac{1}{m})^{m+1} < \sqrt{\frac{\pi}{2}} e$ (for $m \geq 2$) we have

$$\frac{(2k-1)!}{2^{2k-1}((k-1)!)^2} \leq \frac{(2k-1)^{2k-\frac{1}{2}} \cdot e^{-2k+1} \cdot e^{\frac{1}{12(2k-1)}}}{\sqrt{2 \pi} \cdot 2^{2k-1} \cdot (k-1)^{2k-1} \cdot e^{-2(k-1)} \cdot e^{\frac{2}{12(k-1)+1}}}$$
$$= \sqrt{2k-1} \cdot \frac{1}{\sqrt{2 \pi} e} \cdot \frac{e^{\frac{1}{12(2k-1)}}}{e^{\frac{2}{12(k-1)+1}}}  \left ( \frac{2k-1}{2k-2} \right )^{2k-1}$$ 
$$=\sqrt{2k-1} \cdot \frac{1}{\sqrt{2 \pi} e} \cdot \frac{e^{\frac{1}{12(2k-1)}}}{e^{\frac{2}{12(k-1)+1}}} \cdot  \left ( 1+\frac{1}{2k-2} \right )^{2k-1} < \frac{1}{\sqrt{2 \pi} e} \cdot 1 \cdot  \sqrt{\frac{\pi}{2}} e$$
$$=\frac{1}{2} \sqrt{2k-1} = \frac{1}{2} \sqrt{n}.$$ 

Now we shall consider the case $n=2k$. We have
$$\frac{A_{n-2}(1)}{A_{n-1}(1)}=\frac{A_{2k-2}(1)}{A_{2k-1}(1)}=\frac{2^{2k-3} \cdot (k-2)! \cdot (k-1)!}{\pi (2k-3)!}.$$
For $k=2$ our assertion follows easily. For $k \geq 3$ we apply the Stirling's approximation (\ref{stirling}) again and a simple estimation $e^{\frac{1}{8}} < \sqrt{\frac{\pi}{2}}$ to get
$$\frac{2^{2k-3} \cdot (k-2)! \cdot (k-1)!}{\pi (2k-3)!} \leq \frac{\sqrt{2} \cdot 2^{2k-3} \cdot (k-2)^{k-\frac{3}{2}} \cdot (k-1)^{k-\frac{1}{2}} \cdot e^{2k-3} \cdot e^{\frac{1}{12(k-2)} + \frac{1}{12(k-1)}}}{\sqrt{\pi} \cdot  (2k-3)^{2k-\frac{5}{2}}e^{2k-3}e^{\frac{1}{12(2k-3)+1}}}$$
$$=\frac{\sqrt{2}}{4\sqrt{\pi}} \cdot \frac{(2k-2)^{k-\frac{1}{2}} \cdot (2k-4)^{k-\frac{1}{2}}}{(2k-3)^{2k-1}} \cdot \frac{(2k-3)^{\frac{3}{2}}}{k-2} \cdot e^{\frac{1}{12(k-2)} + \frac{1}{12(k-1)} - \frac{1}{12(2k-3)+1}} $$
$$=\frac{\sqrt{2}}{4\sqrt{\pi}} \cdot \left ( \frac{(2k-3)^2-1}{(2k-3)^2} \right )^{k-\frac{1}{2}} \cdot  \frac{(2k-3)^{\frac{3}{2}}}{k-2} \cdot  e^{\frac{1}{12(k-2)} + \frac{1}{12(k-1)} - \frac{1}{12(2k-3)+1}}$$
$$ \leq \frac{\sqrt{2}}{4\sqrt{\pi}} \cdot 1 \cdot \frac{(2k-3)^{\frac{3}{2}}}{k-2} \cdot e^{\frac{1}{8}} < \frac{\sqrt{2}}{4\sqrt{\pi}} \cdot \frac{(2k-3)^{\frac{3}{2}}}{k-2} \cdot \sqrt{\frac{\pi}{2}} = \frac{(2k-3)^{\frac{3}{2}}}{4(k-2)}.$$

By using an inequality $(2k-3)^3 \leq 8k(k-2)^2$ that can be checked by hand for $k \geq 3$ we conclude finally that 
$$ \frac{(2k-3)^{\frac{3}{2}}}{4(k-2)} \leq \frac{\sqrt{2k}}{2} = \frac{\sqrt{n}}{2}.$$
Thus
$$\mu(S) \leq \frac{2tA_{n-2}(1)}{A_{n-1}(1)} \leq t \sqrt{n}$$ 
and the lemma is proved.

\end{proof}

Main estimate of this section is given by

\begin{lem}
\label{funkcjonaly}
Let $|| \cdot ||$ be a norm in $\mathbb{R}^n$ (where $n \geq 4$) and let $f_1, f_2, \ldots, f_m$ be nonzero functionals. Then, there exists $y \in \mathbb{R}^n$ such that $||y||=1$ and
$$|f_i(y)| \geq \frac{||f_i||}{nm},$$
for every $i=1, 2, \ldots, m$. 
\end{lem}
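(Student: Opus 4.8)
The plan is to combine the measure estimate of Lemma~\ref{objetosc} with a union bound, and then to transfer the resulting point from the Euclidean sphere onto the unit sphere of $||\cdot||$; the factor $\sqrt{n}$ will enter through John's ellipsoid. First I would reduce to a normalized situation. The asserted inequality $|f_i(y)| \geq ||f_i||/(\sqrt{n}(n-1)m)$ is invariant under replacing $f_i$ by a positive multiple of itself, since both sides scale the same way. Thus it suffices to produce a single $y$ with $||y||=1$ and $|f_i(y)| \geq \frac{1}{\sqrt{n}(n-1)m}$ under the extra assumption $||f_i||^{\star}=1$ for every $i$, where $||\cdot||^{\star}$ denotes the dual norm.

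Next, to compare $||\cdot||$ with the Euclidean norm I would pass to John's position. After a linear change of coordinates, which preserves all the quantities $f_i(y)$, $||y||$ and $||f_i||^{\star}$ appearing in the statement (it acts simultaneously on the space and on its dual), I may assume that the unit ball $B$ of $||\cdot||$ satisfies $B_2 \subseteq B \subseteq \sqrt{n}\,B_2$, where $B_2$ is the standard Euclidean ball; the central symmetry of $B$ is precisely what improves the generic factor $n$ to $\sqrt{n}$. With respect to this fixed Euclidean structure I write $f_i(x)=\langle a_i, x\rangle$, so that $|a_i|_2$ is the Euclidean operator norm of $f_i$ and $g_i := f_i/|a_i|_2$ is a Euclidean norm-one functional to which Lemma~\ref{objetosc} applies. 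The inclusion $B \subseteq \sqrt{n}\,B_2$ dualizes to $||h||^{\star} \leq \sqrt{n}\,|h|_2$, hence $|a_i|_2 \geq ||f_i||^{\star}/\sqrt{n} = 1/\sqrt{n}$; the inclusion $B_2 \subseteq B$ gives $||x|| \leq |x|_2$ for all $x$. Note that Lemma~\ref{objetosc} remains valid for this Euclidean structure, since its proof uses only the rotational invariance of $\mu$.

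Now I would run the counting argument on the Euclidean sphere. By Lemma~\ref{objetosc}, each bad set $S_i = \{ z \in \mathbb{S}^{n-1} : |g_i(z)| \leq t \}$ satisfies $\mu(S_i) < t(n-1)$, so the choice $t = \frac{1}{m(n-1)}$ yields $\mu\big(\bigcup_{i=1}^m S_i\big) < m t (n-1) = 1$. Hence there is a point $z$ with $|z|_2=1$ and $|g_i(z)| > t$ for all $i$, that is $|f_i(z)| = |a_i|_2\,|g_i(z)| > \frac{|a_i|_2}{m(n-1)} \geq \frac{1}{\sqrt{n}(n-1)m}$. Finally I set $y = z/||z||$; since $B_2 \subseteq B$ we have $||z|| \leq |z|_2 = 1$, so $||y||=1$ and $|f_i(y)| = |f_i(z)|/||z|| \geq |f_i(z)| > \frac{1}{\sqrt{n}(n-1)m}$, which is the desired estimate in the normalized case and, after undoing the normalization, in general.

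The main obstacle is the passage from the arbitrary norm to the Euclidean one with only a $\sqrt{n}$ loss: this is exactly where John's ellipsoid is needed, together with the verification that the measure estimate of Lemma~\ref{objetosc} and all quantities in the statement transform correctly under the change of coordinates. Once this comparison is secured, the union bound and the rescaling $y = z/||z||$ are routine.
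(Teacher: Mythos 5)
Your proposal is correct and follows essentially the same route as the paper: the measure estimate of Lemma~\ref{objetosc} plus a union bound on the Euclidean sphere, then John's ellipsoid to transfer to the arbitrary norm with a $\sqrt{n}$ loss. The only (harmless) difference is bookkeeping: you place the unit ball in John position and absorb the $\sqrt{n}$ through the dual inequality $|a_i|_2 \geq ||f_i||^{\star}/\sqrt{n}$, whereas the paper pulls the functionals back through the linear map $T$ (so their Euclidean norms are $\geq 1$) and loses the $\sqrt{n}$ only in the final rescaling of $Ty$.
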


\begin{proof}
By rescaling we can assume that $||f_i||=1$ for every $1 \leq i \leq m$. First suppose that $|| \cdot || = || \cdot ||_2$ is the Euclidean norm. By Lemma \ref{objetosc} for
$$S_i=\left \{x: ||x||_2=1 \text{ and } |f_i(x)| \leq \frac{1}{\sqrt{n}m} \right \}$$
we have $\mu(S_i) < \frac{1}{m}.$
In consequence
$$\mu(S_1 \cup S_2 \cup \ldots \cup S_m) \leq \mu(S_1) + \mu(S_2) \ldots + \mu(S_m) < \frac{m}{m}=1.$$
It follows that there exists $y \in \mathbb{R}^n$ such that $||y||_2=1$ and $|f_i(y)| \geq \frac{1}{\sqrt{n}m}$ for every $i=1, 2, \ldots, m$.

Suppose now that $|| \cdot ||$ is an arbitrary norm in $\mathbb{R}^n$. By the John Ellipsoid Theorem there exists a linear transformation $T: \mathbb{R}^n \to \mathbb{R}^n$ such that
$$||x||_2 \leq ||Tx|| \leq \sqrt{n} ||x||_2$$
for any $x \in \mathbb{R}^n$. Let $\tilde{f_i} = f_i \circ T$ for $i=1, 2, \ldots, m$. It is easy to check that $||\tilde{f}||_2 \geq 1$. Indeed, consider $x_0$ satisfying $||x_0||=1$ and $|f_i(x_0)| = 1$. Then
$$||T^{-1}(x_0)||_2 \leq ||T(T^{-1}(x_0))|| = ||x_0|| = 1$$
and
$$|\tilde{f_i}(T^{-1}(x_0))| = |f_i(x_0)| = 1.$$
In consequence, we can apply the previous part to the $\tilde{f_i}$'s considered in the Euclidean norm. It yields an existence of $y$ such that $||y||_2 =1$ and
$$|f_i(Ty)| = |\tilde{f_i}(y)| \geq  \frac{1}{\sqrt{n}m}.$$
However, $||T(y)|| \leq \sqrt{n}$ and therefore after an appropriate rescaling the vector $T(y)$ satisfies the desired conditions.

\end{proof}

\section{Proof of Theorem \ref{glowne}}
\label{sekcjaglowne}

In this section we prove Theorem \ref{glowne}. To make use of Lemma \ref{projekcja} we need some information about the modulus of convexity of the dual of a subspace of $\ell_{2p}^{m}$ space. We take care of that in the two following lemmas. Note that in fact we need estimation on the modulus of convexity of a quotient space of $\ell_{q}^{m}$, where $q = \frac{2p}{2p-1}$.

\begin{lem}
\label{modul}
Let $1 \leq q \leq 2$. Then the modulus of convexity of the space $\ell_q^n$ satisfies $\delta_{\ell_q^n}(t) \geq \frac{q-1}{8}t^2$ for every $t \in [0, 2]$.
\end{lem}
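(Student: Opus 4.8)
The plan is to deduce the stated estimate from the sharp $2$-uniform convexity (parallelogram-type) inequality for $\ell_q^n$ with $1 \le q \le 2$, namely that for all $x, y$
$$\left\| \frac{x+y}{2} \right\|_q^2 + (q-1)\left\| \frac{x-y}{2} \right\|_q^2 \le \frac{1}{2}\|x\|_q^2 + \frac{1}{2}\|y\|_q^2.$$
Granting this, the lemma is immediate: if $\|x\|_q, \|y\|_q \le 1$ and $\|x-y\|_q \ge t$, then $\left\|\tfrac{x+y}{2}\right\|_q^2 \le 1 - (q-1)\tfrac{t^2}{4}$, hence $\left\|\tfrac{x+y}{2}\right\|_q \le \sqrt{1 - (q-1)t^2/4}$. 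Since $(q-1)t^2/4 \le 1$ for $q \le 2$ and $t \le 2$, and since $1 - \sqrt{1-s} \ge s/2$ for $s \in [0,1]$, I obtain $1 - \left\|\tfrac{x+y}{2}\right\|_q \ge \frac{(q-1)t^2}{8}$, and taking the infimum over admissible $x,y$ gives $\delta_{\ell_q^n}(t) \ge \frac{q-1}{8}t^2$.

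The core of the argument is a scalar two-point inequality: for all real $a,b$,
$$\left( \frac{|a+b|^q + |a-b|^q}{2} \right)^{2/q} \ge a^2 + (q-1)b^2.$$
I would prove this by homogeneity, reducing (for $a \neq 0$, the case $a=0$ being trivial since $q-1 \le 1$) to the one-variable inequality $\frac{(1+s)^q + |1-s|^q}{2} \ge \left(1 + (q-1)s^2\right)^{q/2}$ in the variable $s = b/a \ge 0$, which can be verified by elementary single-variable calculus. I expect this scalar inequality to be the main technical obstacle: the right-hand side is not symmetric in $a$ and $b$, the left-hand side is only piecewise smooth at $s=1$, and the appearance of the optimal constant $q-1$ forces the estimate to be tight to second order near $s=0$, leaving essentially no slack.

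Once the scalar inequality is in hand, the passage to vectors proceeds cleanly. Writing $u = \tfrac{x+y}{2}$, $v = \tfrac{x-y}{2}$, applying the scalar inequality coordinatewise to $(u_i, v_i)$ and summing yields
$$\frac{1}{2}\|u+v\|_q^q + \frac{1}{2}\|u-v\|_q^q \ge \sum_{i=1}^{n} \left( |u_i|^2 + (q-1)|v_i|^2 \right)^{q/2}.$$
Raising both sides to the power $2/q \ge 1$ and applying the reverse Minkowski inequality for the exponent $q/2 \le 1$ to the nonnegative sequences $(|u_i|^2)$ and $((q-1)|v_i|^2)$ gives
$$\left( \frac{\|u+v\|_q^q + \|u-v\|_q^q}{2} \right)^{2/q} \ge \left( \sum_i |u_i|^q \right)^{2/q} + (q-1)\left( \sum_i |v_i|^q \right)^{2/q} = \|u\|_q^2 + (q-1)\|v\|_q^2.$$
Finally, the power-mean inequality with exponents $q \le 2$ applied to $\|u+v\|_q$ and $\|u-v\|_q$ shows that the left-hand side is bounded above by $\frac{1}{2}\|u+v\|_q^2 + \frac{1}{2}\|u-v\|_q^2$; substituting back $u+v = x$, $u-v = y$ produces exactly the parallelogram inequality from the first paragraph, completing the chain.
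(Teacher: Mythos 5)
Your proposal is correct in outline, but it takes a genuinely different route from the paper, which offers no argument at all for this lemma: the paper simply cites A.~Meir's article on the moduli of smoothness of $L_p$ spaces. What you describe is essentially the Ball--Carlen--Lieb proof of sharp $2$-uniform convexity of $L_q$ for $1 \le q \le 2$: the scalar two-point inequality
$\bigl( \tfrac{1}{2}|a+b|^q + \tfrac{1}{2}|a-b|^q \bigr)^{2/q} \ge a^2 + (q-1)b^2$,
lifted to $\ell_q^n$ coordinatewise, combined with the reverse Minkowski inequality for the exponent $q/2 \le 1$ and the power-mean inequality to obtain the parallelogram-type estimate, from which the bound $\delta_{\ell_q^n}(t) \ge \frac{q-1}{8}t^2$ follows by the elementary inequality $\sqrt{1-s} \le 1 - s/2$. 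Every step of this chain is valid, and the constants come out exactly as claimed. The one point where you understate the difficulty is the scalar inequality itself: it is the analytic core of the Ball--Carlen--Lieb theorem, not a routine calculus exercise. Because the constant $q-1$ is optimal, the inequality is tight to second order at $s=0$ and identically at $q=2$, and the known proofs proceed by a careful term-by-term comparison of the power series of $\frac{(1+s)^q+|1-s|^q}{2}$ and $\bigl(1+(q-1)s^2\bigr)^{q/2}$ (or equivalent convexity tricks); ``verified by elementary single-variable calculus'' is therefore a promissory note covering the hardest part of the argument, and in practice you would either carry out that series comparison or cite it. What your route buys is a self-contained derivation with the sharp constant, in place of the paper's bare reference; what it costs is that the two-point lemma you rely on is itself a deep fact of the same order of difficulty as the statement being proved.
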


\begin{proof}

See \cite{meir}.
\end{proof}

The next lemma basically says that the operation of taking a quotient does not worsen the convexity. 

\begin{lem}
\label{modul2}
Let $X$ be a finite dimensional Banach space and $Y$ its subspace. Then $\delta_{X/Y}(t) \geq \delta_X(t)$ for every $t \in [0, 2]$.
\end{lem}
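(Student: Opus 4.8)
The plan is to exploit the fact that the quotient map $\pi \colon X \to X/Y$ is a norm-one linear surjection whose defining feature---the quotient norm $\|\pi(x)\| = \dist(x, Y) = \inf_{y \in Y} \|x - y\|$---makes it a contraction carrying the closed unit ball of $X$ \emph{onto} the closed unit ball of $X/Y$. The surjectivity of balls is precisely where finite-dimensionality enters: since $Y$ is closed and $X$ is finite-dimensional, the infimum in the quotient norm is attained, so every coset of norm at most $1$ has a representative of exactly the same norm.

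First I would fix $t \in [0, 2]$ and take arbitrary $u, v \in X/Y$ with $\|u\|, \|v\| \le 1$ and $\|u - v\| \ge t$; the goal is to bound $1 - \|(u+v)/2\|$ from below by $\delta_X(t)$, after which taking the infimum over all such pairs yields the claim. Using the attainment of the quotient norm, I would lift $u$ and $v$ to vectors $x, y \in X$ satisfying $\pi(x) = u$, $\pi(y) = v$, together with $\|x\| = \|u\| \le 1$ and $\|y\| = \|v\| \le 1$.

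Two applications of the contractivity of $\pi$ then finish the argument. On one hand, since $\pi(x - y) = u - v$ and $\pi$ does not increase norm, we get $\|x - y\| \ge \|u - v\| \ge t$, so the pair $(x, y)$ is admissible in the definition of $\delta_X(t)$ and hence $1 - \|(x+y)/2\| \ge \delta_X(t)$. On the other hand, $\pi((x+y)/2) = (u+v)/2$ gives $\|(u+v)/2\| \le \|(x+y)/2\|$, whence $1 - \|(u+v)/2\| \ge 1 - \|(x+y)/2\| \ge \delta_X(t)$, exactly as required.

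The only genuinely delicate point is the lifting step---producing representatives of precisely the correct norm---which is why the hypothesis that $X$ is finite-dimensional is invoked; in the general Banach setting one would instead lift with norm $\|u\| + \varepsilon$ and let $\varepsilon \to 0$, but here compactness makes the argument immediate. Everything else is a direct and routine consequence of $\pi$ being a norm-one quotient map.
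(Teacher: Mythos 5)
Your proof is correct and follows essentially the same route as the paper's: both lift the cosets to norm-attaining representatives (which exist by finite-dimensionality), observe that the lifted difference has norm at least $t$ since the quotient map is a contraction, and compare the midpoint norms via $\left\|\left[\frac{x+y}{2}\right]\right\|_{X/Y} \leq \left\|\frac{x+y}{2}\right\|$. The only cosmetic difference is that you bound every admissible pair in the quotient directly and take the infimum at the end, whereas the paper applies the same argument to an $\varepsilon$-nearly-optimal pair and lets $\varepsilon \to 0$.
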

\begin{proof}
Let us recall that norm $||[x]||_{X/Y}$ in the quotient space is defined as $||[x]||_{X/Y}=\dist(x, Y)$. For every $x \in X$ we clearly have $||[x]||_{X/Y} \leq ||x||$. Let us fix $t \in [0, 2]$ and $\varepsilon>0$. Choose $x,\ y \in X$ such that $||[x]||_{X/Y}, ||[y]||_{X/Y} \leq 1$, $||[x-y]||_{X/Y} \geq t$ and
$$\delta_{X/Y}(t) \geq 1 - \left | \left | \left [ \frac{x+y}{2} \right ] \right | \right | - \varepsilon.$$
All of these inequalities are not changed if we replace $x$ and $y$ by $x-x_1$ and $y-y_1$ respectively, where $x_1,\ y_1 \in Y$ satisfy $\dist(x, Y) = ||x-x_1||,\ \dist(y, Y)=||y-y_1||$ (such $x_1,\ y_1$ exists because of finite dimension). Therefore we can assume that $||x|| \leq 1$ and $||y|| \leq 1$. Then we also have $||x-y|| \geq ||[x-y]||_{X/Y} \geq t.$ Moreover
$$\delta_X(t) \leq 1 - \left | \left | \frac{x+y}{2} \right | \right | \leq 1 - \left | \left | \left [ \frac{x+y}{2} \right ] \right | \right | \leq \delta_{X/Y}(t) + \varepsilon.$$
Since $\varepsilon$ can be arbitrarly small it follows that $\delta_{X/Y}(t) \geq \delta_X(t)$ for every $t \in [0, 2]$ and the proof is finished.
\end{proof}

We need also a formula for a supporting functional in the case of a subspace of $\ell_{2p}^m$ space. It is given in the next lemma.

\begin{lem}
\label{supporting}
Let $X=(\mathbb{R}^n, || \cdot ||)$ be the normed space defined in Theorem \ref{glowne}. Let $y \in X$ be nonzero vector. Then the supporting functional $f_y$ of vector $y$ is given by
$$f_y(x) = \frac{1}{||y||^{2p-1}}\sum_{i=1}^{m} f_i(y)^{2p-1}f_i(x).$$
\end{lem}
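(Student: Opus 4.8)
The plan is to compute the supporting functional directly from its definition using the smoothness of the space. Since $X$ is a subspace of (an isometric copy of) $\ell_{2p}^m$ via the embedding $x \mapsto (f_1(x), \dots, f_m(x))$, and $\ell_{2p}^m$ is smooth for $p \geq 1$, the space $X$ is smooth and hence every nonzero $y$ has a unique supporting functional $f_y$ characterized by $\|f_y\| = 1$ and $f_y(y) = \|y\|$. My strategy is to exhibit the claimed functional and verify these two defining properties; by uniqueness this suffices.

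First I would observe that the norm $\|x\| = \left(\sum_{i=1}^m |f_i(x)|^{2p}\right)^{1/2p}$ is differentiable in $x$ (the exponent $2p$ is an even integer, so $|f_i(x)|^{2p} = f_i(x)^{2p}$ is a polynomial and the sum is smooth away from the origin). The supporting functional is exactly the Gateaux derivative of the norm at $y$, normalized. Computing directly, for the function $N(x) = \|x\|^{2p} = \sum_{i=1}^m f_i(x)^{2p}$ one has
$$\frac{\partial}{\partial x} N(x)\bigg|_{y}(h) = \sum_{i=1}^m 2p\, f_i(y)^{2p-1} f_i(h),$$
so the derivative of $\|x\| = N(x)^{1/2p}$ at $y$ in direction $h$ equals
$$\frac{1}{2p} N(y)^{\frac{1}{2p}-1} \cdot 2p \sum_{i=1}^m f_i(y)^{2p-1} f_i(h) = \frac{1}{\|y\|^{2p-1}} \sum_{i=1}^m f_i(y)^{2p-1} f_i(h).$$
This is precisely the candidate functional $f_y$ in the statement.

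Next I would verify the two defining properties. For $f_y(y) = \|y\|$, substituting $x = y$ gives $f_y(y) = \|y\|^{-(2p-1)} \sum_i f_i(y)^{2p} = \|y\|^{-(2p-1)} \|y\|^{2p} = \|y\|$, as required. For the norm, the fact that the Gateaux derivative of a norm at a unit vector is a supporting functional of norm $1$ is standard; alternatively one can argue via Hölder's inequality. Writing $a_i = f_i(y)^{2p-1}$ and $b_i = f_i(x)$, one applies Hölder with exponents $q = \frac{2p}{2p-1}$ and $2p$ to get
$$|f_y(x)| \leq \frac{1}{\|y\|^{2p-1}} \left(\sum_i |f_i(y)|^{(2p-1)q}\right)^{1/q} \left(\sum_i |f_i(x)|^{2p}\right)^{1/2p} = \frac{\|y\|^{2p-1}}{\|y\|^{2p-1}} \|x\| = \|x\|,$$
since $(2p-1)q = 2p$, which shows $\|f_y\| \leq 1$; combined with $f_y(y) = \|y\|$ for $y \neq 0$ this forces $\|f_y\| = 1$. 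The equality case of Hölder also confirms $y$ is where the supremum is attained.

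The main point to be careful about, rather than a genuine obstacle, is justifying smoothness and hence uniqueness of the supporting functional, since this is what lets me conclude that the explicit formula is \emph{the} supporting functional and not merely \emph{a} supporting functional. This follows because $\ell_{2p}^m$ is smooth (its norm is differentiable away from the origin for $2p > 1$), and a subspace of a smooth space is smooth; alternatively it follows from strict convexity of the dual. The computation itself is entirely routine once differentiability is in hand.
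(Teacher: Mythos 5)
Your proof is correct and takes essentially the same approach as the paper: the paper's entire argument is to observe that $f_y(y)=\|y\|$ holds trivially and that $\|f_y\|\leq 1$ follows directly from H\"older's inequality, which is exactly your core verification (with the same exponents $q=\frac{2p}{2p-1}$ and $2p$). Your additional material --- deriving the formula as the Gateaux derivative of the norm and justifying smoothness of $X$ so that the formula gives \emph{the} (unique) supporting functional --- simply fills in details that the paper leaves implicit.
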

\begin{proof}
Obviously $f_y(y)=||y||$ and it sufficies to check that $||f_y|| \leq 1$. But this follows directly from H\"older's inequality.
\end{proof}

The well-known characterization of one-complemented subspaces of classical $\ell_p^{m}$ spaces (see for example \cite{lewickiorlicz} for much more general result) states that $\lambda(\ker f, \ell_{p}^{m})=1$ if and only if the vector corresponding to a functional $f$ has at most two coordinates that are different from $0$. In other words, if we denote be $e(i)$ the unit vectors from the canonical basis then $\lambda(\ker f, \ell_{p}^{m})=1$ if and only if $f = a e(i)$ for some $1 \leq i \leq m$, $a \neq 0$ or $f = ae(i) + be(j)$ for $1 \leq i < j \leq m$ and $a, b \neq 0$. In our setting we have corresponding situations in which functional $f$ is close to some functional of the form $a f_i$ or $af_i + bf_j$ (where $f_1, f_2, \ldots, f_m$ are functionals defining the subspace). It turns out that in these cases the relative projection constant is still greater than $1$, but some special treatment is necessary. We shall thus consider three cases: functional $f$ is close to a functional of the form $af_i$, functional $f$ is close to a functional of the form $af_i + bf_j$ and neither of these. Although reasoning in each of these possibilities runs along similar lines, there are some adjustments necessary to fit the argument to each situation. In fact, much of the difficulty of the proof of Theorem \ref{glowne} is hidden in a careful choice of the precise range in which we say that $f$ is ,,close'' to $af_i$ or $af_i + bf_j$. It is crucial to know that $f$ can not be close to two functionals of this form at the same time. We establish this type of result in the two following lemmas.

\begin{lem}
\label{blisko1}
Let be $|| \cdot ||$ be an arbitrary norm in $\mathbb{R}^n$ and let $f, f_1, \ldots, f_m \in \mathbb{R}^n$. Assume that $ 0 < \alpha \leq \frac{1}{2}$ is a real number such that for every $0 \leq j < k < l \leq m$ and $0 \leq i \leq m$, $i \not \in \{j, k, l\}$ we have
$$\dist\left ( f_i, \lin\{f_j, f_k, f_l\} \right ) \geq \alpha,$$ 
where the distance is with respect to the norm $|| \cdot ||$. Suppose that there exist indices $1 \leq k, l \leq m$, $k \neq l$ such that $||f_k+ a_0f_l + r_0f|| \leq \frac{\alpha}{2}$ for some $a_0, r_0 \in \mathbb{R}$. Then $||f_i+af_j +rf|| \geq \frac{\alpha}{2}$ for every $1\leq i, j \leq m$, $i \not \in \{j, k,\ l\}$, $j \neq k$ and $a, r \in \mathbb{R}$.

\end{lem}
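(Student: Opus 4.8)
The plan is to argue by contradiction, manufacturing from the two competing almost-relations a single genuine near-dependence among $f_1,\dots,f_m$ alone, and then contradicting the separation hypothesis on $\alpha$. Set $u = f_k + a_0 f_l + r_0 f$, so $\|u\| \le \frac{\alpha}{2}$, and suppose toward a contradiction that the conclusion fails: there are admissible indices $i \notin \{j,k,l\}$, $j \ne k$ and scalars $a,r$ with $v = f_i + a f_j + r f$ satisfying $\|v\| < \frac{\alpha}{2}$. I will only ever use the separation hypothesis for the functionals $f_1,\dots,f_m$ (never for $f$ itself): it gives $\dist(f_a, \lin\{f_b,f_c,f_d\}) \ge \alpha$ for distinct indices $a,b,c,d$, and, since enlarging the spanning set can only decrease the distance, also $\dist(f_a,\lin\{f_b,f_c\}) \ge \alpha$ for distinct $a,b,c$ (here I use that $m$ is large enough, $m \ge n+2$, to supply one further index). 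This restricted form is certainly implied by the stated hypothesis and is all that the argument requires.

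The central step is to eliminate the external functional $f$. Forming the combination $r u - r_0 v$ cancels the $f$-terms and yields
\[
r f_k + r a_0 f_l - r_0 f_i - r_0 a f_j = r u - r_0 v,
\]
whose right-hand side has norm at most $|r|\,\|u\| + |r_0|\,\|v\| \le (|r|+|r_0|)\tfrac{\alpha}{2}$. This is now a relation purely among $f_i,f_j,f_k,f_l$, with the coefficient of $f_i$ equal to $-r_0$ and that of $f_k$ equal to $r$.

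The main obstacle is that the scalars $r$ and $r_0$ are uncontrolled, so dividing through by either could blow up the error bound. I resolve this by solving for whichever of $f_i,f_k$ carries the larger coefficient, which forces the relevant ratio to be at most $1$. If $|r_0| \ge |r|$, I divide by $-r_0$ and express $f_i$ as a combination of $f_j,f_k,f_l$ plus the error $-\frac{r}{r_0}u + v$; since $\frac{|r|}{|r_0|} \le 1$ this gives $\dist(f_i,\lin\{f_j,f_k,f_l\}) \le \frac{|r|}{|r_0|}\|u\| + \|v\| < \frac{\alpha}{2} + \frac{\alpha}{2} = \alpha$, the strictness coming from $\|v\| < \frac{\alpha}{2}$. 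The admissible triple here is $\{j,k,l\}$, legitimate because $i \notin \{j,k,l\}$. If instead $|r| \ge |r_0|$, I divide by $r$ and solve for $f_k$, obtaining $\dist(f_k,\lin\{f_i,f_j,f_l\}) \le \|u\| + \frac{|r_0|}{|r|}\|v\| < \alpha$ in the same way; the triple $\{i,j,l\}$ is legitimate because $k$ differs from $i$, $j$ and $l$. Either branch contradicts the separation bound.

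Two bookkeeping points complete the argument. First, $j$ and $l$ need not be distinct; when $j=l$ the relevant span collapses to one spanned by two vectors, but then the pairwise separation bound recorded in the first paragraph still delivers a distance $\ge \alpha$, so the contradiction persists. Second, the degenerate scalar cases are easily absorbed: if $r_0 = 0$ the hypothesis relation reads $\|f_k + a_0 f_l\| \le \frac{\alpha}{2}$, placing $f_k$ within $\frac{\alpha}{2}$ of $\lin\{f_l\}$ and hence, after enlarging the span with further indices, contradicting the separation bound outright, while the case $r=0$ (with $r_0 \ne 0$) falls into the $f_i$-branch above with error term exactly $v$. This exhausts all possibilities and yields the contradiction, proving the lemma.
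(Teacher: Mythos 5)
Your proof is correct and follows essentially the same route as the paper: argue by contradiction, split on whether $|r_0|\geq |r|$ or $|r|\geq |r_0|$, eliminate $f$ by substituting the relation with the larger $f$-coefficient (your combination $ru-r_0v$ is algebraically identical to the paper's adding and subtracting $\frac{r}{r_0}(f_k+a_0f_l+r_0f)$), and contradict the separation hypothesis $\dist(f_i,\lin\{f_j,f_k,f_l\})\geq\alpha$, respectively $\dist(f_k,\lin\{f_i,f_j,f_l\})\geq\alpha$. Your explicit handling of the degenerate cases ($r=0$, $r_0=0$, and $j=l$ collapsing the span) only spells out what the paper dismisses with ``it is clear that $r,\ r_0$ are nonzero'' and the implicit monotonicity of distances under enlarging the span.
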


\begin{proof}
Assume that for some $i,\ j,\ a,\ r$ as above the opposite inequality is true. It is clear that $r,\ r_0$ are nonzero and to reach contradiction we can suppose that $|r_0| \geq |r|$, as the conditions are now symmetric. It follows that
$$||f_i+af_j +rf||=\left |\left |f_i + af_j + \frac{r}{r_0}\left ( f_k + a_0f_l + r_0f \right ) - \frac{r}{r_0}\left(f_k + a_0f_l \right ) \right | \right|$$
$$= \left | \left |\left (f_i + af_j - \frac{r}{r_0}f_k - \frac{a_0r}{r_0} f_l \right ) + \frac{r}{r_0}\left ( f_k+a_0f_l +r_0f \right ) \right | \right | \geq \alpha - \left | \frac{r}{2r_0}  \right | \alpha \geq \frac{\alpha}{2}$$
This is a contradiction with the assumption and the lemma is proved.
\end{proof}

The result above does not cover the case $i=l$, which shall be treated in the next lemma.

\begin{lem}
\label{blisko2}
Let be $|| \cdot ||$ be an arbitrary norm in $\mathbb{R}^n$ and let $f, f_1, \ldots, f_m \in \mathbb{R}^n$. Assume that $ 0 < \alpha \leq \frac{1}{2}$ is a real number such that for every $0 \leq j < k < l \leq m$ and $0 \leq i \leq m$, $i \not \in \{j, k, l\}$ we have
$$\dist\left ( f_i, \lin\{f_j, f_k, f_l\} \right ) \geq \alpha,$$ 
where the distance is with respect to the norm $|| \cdot ||$. Suppose moreover that $0 < L < K < \frac{1}{2}$ are real numbers such that $K\alpha> 4L$. Assume that there exist indices $1 \leq k, l \leq m$, $k \neq l$ such that $||f_k+ a_0f_l + r_0f|| \leq L$ for some $a_0, r_0 \in \mathbb{R}$ and $||f_l + rf|| \geq K$ for every $r \in \mathbb{R}$. Then $||f_i+af_j +rf|| \geq \frac{K\alpha}{2}$ for every $i,\ j \in \{1, 2, \ldots, m\} \setminus \{k\}$, $i \neq j$ and $a, r \in \mathbb{R}$.
\end{lem}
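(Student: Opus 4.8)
The plan is to mimic the proof of Lemma~\ref{blisko1}, again arguing by contradiction, but now exploiting the additional hypothesis $||f_l+rf|| \geq K$ to compensate for the fact that $i=l$ is allowed. Suppose that for some $i,\ j \in \{1,\ldots,m\}\setminus\{k\}$, $i \neq j$, and some $a,r \in \mathbb{R}$ we have $||f_i+af_j+rf|| < \frac{K\alpha}{2}$. As before, both $r$ and $r_0$ must be nonzero (otherwise we would have a nontrivial small combination of at most three of the $f$'s, contradicting the distance hypothesis with $\alpha$). The key new feature is that I first want to control the ratio $\left|\frac{r}{r_0}\right|$ from above, and it is here that the assumption on $f_l$ enters.

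First I would substitute the near-relation $f_k \approx -a_0 f_l - r_0 f$ into $f_i+af_j+rf$ exactly as in Lemma~\ref{blisko1}, writing
\[
f_i+af_j+rf = \left(f_i+af_j-\tfrac{r}{r_0}f_k-\tfrac{a_0 r}{r_0}f_l\right) + \tfrac{r}{r_0}\left(f_k+a_0 f_l + r_0 f\right).
\]
The second summand has norm at most $\left|\frac{r}{r_0}\right| L$, and the first is a combination of $f_i,f_j,f_l$ (three functionals, since $k \notin \{i,j\}$ and $l \neq k$) together with $f_i$; because $i \neq j$ and $i,j \neq k$, the distance hypothesis bounds the norm of the first summand below by $\alpha$ \emph{provided} $i \notin \{j,l\}$ — and the two degenerate subcases $i=l$ or $j=l$ will need to be handled separately. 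Combining the triangle inequality gives
\[
\tfrac{K\alpha}{2} > ||f_i+af_j+rf|| \geq \alpha - \left|\tfrac{r}{r_0}\right| L,
\]
which yields $\left|\frac{r}{r_0}\right| > \frac{\alpha(1 - K/2)}{L} > \frac{\alpha}{4L}$ using $K<\frac12$, i.e.\ $|r_0| < \frac{4L}{\alpha}|r|$. This is the reverse of the earlier argument: instead of assuming $|r_0|\geq|r|$, the smallness of $L$ relative to $K\alpha$ now forces $r_0$ to be considerably \emph{smaller} than $r$.

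With $|r_0|$ small relative to $|r|$ established, I would use the second hypothesis to reach a contradiction. From $||f_k+a_0f_l+r_0 f|| \leq L$ and the bound on $|r_0|$, one divides by $|r_0|$ (if one wants a relation purely among the $f_i$'s) or, more directly, estimates $||f_l + (\text{something})f||$ by isolating $f_l$. The cleanest route is to note that the upper bound on $\left|\frac{r_0}{r}\right|$ combined with $K\alpha > 4L$ will contradict $||f_l+rf|| \geq K$: rescaling the relation $f_k+a_0 f_l + r_0 f$ so as to express $f_l$ in terms of $f_k$ and $f$, and then using $K\alpha > 4L$ to show the coefficient of $f_k$ must be negligible, forces $f_l$ to lie within distance less than $K$ of the line $\mathbb{R}f$, contradicting $||f_l+rf|| \geq K$ for all $r$. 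The main obstacle I anticipate is precisely the bookkeeping of the degenerate subcases $i=l$ and $j=l$ (where the first summand is a combination of only two functionals and one must re-derive the lower bound, possibly with $\alpha$ replaced by a distance-to-a-line estimate) and the careful chaining of the chosen constants so that $K\alpha>4L$ is used exactly once and in the right place; the algebra itself is routine once the roles of $r$ and $r_0$ are correctly reversed.
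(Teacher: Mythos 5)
There is a genuine gap, and it sits exactly where the content of the lemma lies. Once Lemma \ref{blisko1} is available, the only case that needs a new argument is $i=l$: since $K\alpha>4L$ and $K<\frac12$, $\alpha\leq\frac12$ give $L<\frac{\alpha}{8}<\frac{\alpha}{2}$, Lemma \ref{blisko1} applies to the relation $||f_k+a_0f_l+r_0f||\leq L$ and already yields $||f_i+af_j+rf||\geq \frac{\alpha}{2}>\frac{K\alpha}{2}$ whenever $i\notin\{j,k,l\}$, $j\neq k$; and the subcase $j=l$ is covered by the same lemma, because then $i\notin\{j,k,l\}$ automatically. Your ratio bound $\left|\frac{r}{r_0}\right|>\frac{\alpha(1-K/2)}{L}$ is derived only under the restriction $i\notin\{j,l\}$, i.e.\ precisely in the case that is already settled, while the case $i=l$ --- the actual statement being proved --- is deferred to an unspecified ``separate treatment.'' So the proposal never proves the lemma.

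The contradiction mechanism you sketch for the remaining step is also not a valid implication. You claim that the bound on $\left|\frac{r_0}{r}\right|$ together with $K\alpha>4L$ lets one ``express $f_l$ in terms of $f_k$ and $f$'' with a negligible coefficient of $f_k$, forcing $\dist(f_l,\mathbb{R}f)<K$. Nothing in the hypotheses controls $a_0$: the lemma deliberately makes no assumption of the form $||f_k+rf||\geq K$ (this is exactly why $k$ is excluded from the conclusion), so $a_0=0$ is admissible, in which case $||f_k+a_0f_l+r_0f||\leq L$ carries no information whatsoever about $\dist(f_l,\mathbb{R}f)$, and no bound on $\left|\frac{r}{r_0}\right|$ changes that. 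The hypothesis $||f_l+rf||\geq K$ is consistent with all the other hypotheses; a contradiction can only come from the assumed smallness of $||f_l+af_j+rf||$, used differently. The paper's proof of the case $i=l$ supplies the missing idea: from $\frac{K\alpha}{2}>||(f_l+rf)+af_j||\geq K-|a|$ it extracts $|a|\geq K\left(1-\frac{\alpha}{2}\right)$ --- this is the only place where $||f_l+rf||\geq K$ enters --- and then, using your same decomposition, it bounds the bracket $f_l+af_j-\frac{r}{r_0}f_k-\frac{a_0r}{r_0}f_l$ from below by $\left|\frac{r}{r_0}\right|\alpha$ (coefficient of $f_k$) when $|r|\geq|r_0|$, and by $|a|\alpha$ (coefficient of $f_j$) when $|r|<|r_0|$; the inequality $K\alpha>4L$ is exactly what makes both resulting estimates, $\alpha-L$ and $K\left(\alpha-\frac{\alpha^2}{2}\right)-L$, exceed $\frac{K\alpha}{2}$. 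Note also that the asymmetry of the two smallness hypotheses ($\leq L$ versus $<\frac{K\alpha}{2}$) is why this case split cannot be avoided by a ``without loss of generality'' as in Lemma \ref{blisko1}.
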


\begin{proof}

By Lemma \ref{blisko1} it is enough to consider the case $i=l$. Suppose that $||f_l+af_j+rf|| < \frac{K\alpha}{2}$. Then
$$\frac{K\alpha}{2} > ||(f_l + rf)+af_j|| \geq K - |a|$$
and therefore $|a| \geq K\left ( 1-\frac{\alpha}{2} \right ).$

Assume that $|r| \geq |r_0|$. We obtain
$$||f_l + af_j + rf|| = \left | \left |f_l + af_j + \frac{r}{r_0} \left ( f_k+ a_0f_l + r_0f \right) - \frac{r}{r_0}f_k - \frac{a_0r}{r_0}f_l \right | \right |$$
$$\geq \left | \frac{r}{r_0} \right |\alpha - \left | \frac{r}{r_0} \right |L \geq \alpha - L > \alpha - \frac{K\alpha}{2} - \frac{K\alpha^2}{2}\geq  \frac{K\alpha}{2}$$
which contradicts our assumption. Hence $|r| < |r_0|$. We can estimate similarly like before to get
$$||f_l + af_j + rf|| = \left | \left |f_l + af_j + \frac{r}{r_0} \left ( f_k+ a_0f_l + r_0f \right) - \frac{r}{r_0}f_k - \frac{a_0r}{r_0}f_l \right | \right |$$
$$\geq |a| \alpha - \left | \frac{r}{r_0} \right | L \geq K \left ( \alpha-\frac{\alpha^2}{2} \right ) - L > \frac{K\alpha}{2}.$$
We have again reached a contradiction, which finishes the proof of the lemma.

\end{proof}

One of the key ingredients in the original reasoning of Bohnenblust in \cite{bohnenblust} was the invertibility of the Vandermonde matrix. For our purposes we need some quantitative version of this result. We shall use the following estimation due to Gautschi. For a matrix $A$ of dimensions $m \times m$ we consider its norm as of an operator $A: \ell_{\infty}^{m} \to \ell_{\infty}^m$, that is $||Ax|| = \sup_{||x||_{\infty} \leq 1} ||Ax||_{\infty}$.

\begin{lem}
\label{vandermonde}
Let $x_1, x_2, \ldots, x_m$ be pairwise distinct real numbers and let $V$ be the Vandermonde matrix with columns of the form $(1, x_i, x_i^2, \ldots, x_i^{m-1})$. Then 
$$||V^{-1}|| \leq \max_{1 \leq i \leq m} \prod_{j \neq i} \frac{1+|x_j|}{|x_j-x_i|}.$$
\end{lem}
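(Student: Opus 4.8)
The plan is to use the explicit description of $V^{-1}$ in terms of Lagrange interpolation and then reduce the required estimate to a submultiplicativity property of the $\ell_1$-norm of polynomial coefficients. First I would fix conventions: the norm in question is the operator norm induced by $|| \cdot ||_\infty$ on $\mathbb{R}^m$, so that $||V^{-1}||$ equals the maximum over rows of the sum of absolute values of the entries in that row.

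Next I would identify the rows of $V^{-1}$. For each $i$, let $L_i$ be the Lagrange basis polynomial $L_i(x) = \prod_{j \neq i} \frac{x - x_j}{x_i - x_j}$, which satisfies $L_i(x_l) = \delta_{il}$. Writing $L_i(x) = \sum_{k=1}^m w_{ik} x^{k-1}$ and using that the entries of $V$ are $V_{ki} = x_i^{k-1}$, one checks $\sum_k w_{ik} x_l^{k-1} = L_i(x_l) = \delta_{il}$, i.e. the matrix $W = (w_{ik})$ satisfies $WV = I$. Hence $(V^{-1})_{ik} = w_{ik}$ is the coefficient of $x^{k-1}$ in $L_i$, and $||V^{-1}|| = \max_i \sum_k |w_{ik}|$ is the maximum over $i$ of the $\ell_1$-norm of the coefficient vector of $L_i$.

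The heart of the argument is to bound this $\ell_1$-norm. Writing $L_i(x) = \left(\prod_{j\neq i}(x_i - x_j)\right)^{-1} \prod_{j \neq i}(x - x_j)$, the denominator contributes the factor $\prod_{j\neq i} |x_i - x_j|^{-1}$, and it remains to bound the sum of absolute values of the coefficients of the numerator $\prod_{j\neq i}(x - x_j)$. For this I would use that the $\ell_1$-norm of polynomial coefficients is submultiplicative under multiplication (immediate from the triangle inequality applied to the convolution formula for the coefficients of a product), together with the fact that a linear factor $x - x_j$ has coefficient $\ell_1$-norm $1 + |x_j|$. This yields $\sum_k |w_{ik}| \leq \prod_{j\neq i} \frac{1 + |x_j|}{|x_i - x_j|}$, and taking the maximum over $i$ gives the claimed bound.

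The computation is essentially routine once the interpolation identity for $V^{-1}$ is in place; I expect the only genuine point requiring care is the correct identification of $V^{-1}$ with the Lagrange coefficients, that is, getting the indexing right so that the rows (and not the columns) carry the coefficients of the $L_i$, and confirming that the relevant operator norm is the maximum absolute row sum. The submultiplicativity estimate itself is elementary and could equally be phrased by dominating the coefficients of $\prod_{j\neq i}(x - x_j)$ coefficientwise by the elementary symmetric functions of $|x_1|, \ldots, |x_m|$, i.e. by the coefficients of $\prod_{j\neq i}(x + |x_j|)$, and then evaluating the latter at $x = 1$.
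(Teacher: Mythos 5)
Your proposal is correct. Note, however, that the paper does not actually prove this lemma: it simply cites Gautschi's 1962 paper, so there is no in-paper argument to compare against. What you have written is essentially a self-contained reconstruction of Gautschi's original proof: the identification $V^{-1} = W$ via $\sum_k w_{ik} x_l^{k-1} = L_i(x_l) = \delta_{il}$ is right (with the paper's convention $V_{ki} = x_i^{k-1}$, the rows of $V^{-1}$ do carry the Lagrange coefficients), the relevant norm is indeed the operator norm induced by $\| \cdot \|_\infty$ (the maximum absolute row sum, which is how the lemma is applied in the main proof via $\|Av\|_\infty \geq \|v\|_\infty / \|A^{-1}\|$), and the submultiplicativity of the coefficient $\ell_1$-norm under polynomial multiplication gives $\sum_k |w_{ik}| \leq \prod_{j \neq i} (1+|x_j|)/|x_i - x_j|$ exactly as you say. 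Your closing remark — dominating the coefficients of $\prod_{j\neq i}(x - x_j)$ by those of $\prod_{j\neq i}(x + |x_j|)$ and evaluating at $x = 1$, i.e. by elementary symmetric functions of the $|x_j|$ — is precisely the form in which Gautschi states the estimate, and also explains his observation that equality holds when all nodes have the same sign. So your argument is a complete and correct proof that makes the paper's black-box citation self-contained.
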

\begin{proof}
See \cite{gautschi}.
\end{proof}

Before giving a proof of Theorem \ref{glowne} we need a last small observation.

\begin{lem}
\label{zawezenie}
Let $X$ be a Banach space. Suppose that $f,\ g: X \to \mathbb{R}$ are two linear functionals such that $||f-rg|| \geq a$ for every $r \in \mathbb{R}$ and some $a \in \mathbb{R}$. Then  $||f|_{\ker g}|| \geq a$. 
\end{lem}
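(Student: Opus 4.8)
The plan is to show that the restriction $f|_{\ker g}$ inherits the lower bound $a$ from the hypothesis that $f$ stays at distance at least $a$ from the entire line $\mathbb{R}g$. First I would recall that the norm of a restricted functional can be computed as a distance in the dual space: for a functional $h$ on $X$ and a subspace $Z = \ker g$, the quantity $\|h|_{Z}\|$ equals the distance from $h$ to the annihilator $Z^{\perp}$. Since $g \neq 0$, its kernel is a hyperplane whose annihilator is exactly the one-dimensional space $\lin\{g\} = \{rg : r \in \mathbb{R}\}$ (with the convention $Z^{\perp} = X$ when $g = 0$, but that case is vacuous here). Thus
$$\|f|_{\ker g}\| = \dist(f, \lin\{g\}) = \inf_{r \in \mathbb{R}} \|f - rg\|.$$
The hypothesis says precisely that each term $\|f - rg\| \geq a$, so the infimum is at least $a$, giving the conclusion.

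If I wanted to avoid quoting the quotient/annihilator duality formula and instead argue directly from definitions, I would proceed as follows. The inequality $\|f|_{\ker g}\| \geq a$ is automatic and uninteresting if $a \leq 0$, so the content is for $a > 0$. To bound the restriction norm from below, I would exhibit, for each $\varepsilon > 0$, a unit vector in $\ker g$ on which $f$ is large. The cleaner direction is: for every $r \in \mathbb{R}$ and every unit $x \in \ker g$ we have $f(x) = (f - rg)(x)$ since $g(x) = 0$, hence $\sup_{x \in \ker g, \|x\|=1} |f(x)| = \sup_{x \in \ker g, \|x\|=1} |(f-rg)(x)| \leq \|f - rg\|$ for each $r$. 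That gives the upper bound $\|f|_{\ker g}\| \leq \inf_r \|f - rg\|$, which is the wrong direction. So the genuine work is the reverse inequality $\inf_r \|f - rg\| \leq \|f|_{\ker g}\|$, equivalently finding an $r$ realizing (up to $\varepsilon$) the restriction norm.

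The main obstacle, then, is producing a good choice of $r$, and this is exactly a Hahn--Banach extension argument. I would fix a vector $x_0$ with $g(x_0) = 1$ so that $X = \ker g \oplus \lin\{x_0\}$, extend the restriction $f|_{\ker g}$ to a functional $\tilde f$ on all of $X$ with $\|\tilde f\| = \|f|_{\ker g}\|$ (Hahn--Banach), and observe that $f - \tilde f$ vanishes on $\ker g$, so $f - \tilde f = cg$ for the scalar $c = (f - \tilde f)(x_0)$. Taking $r = c$ then yields $\|f - rg\| = \|\tilde f\| = \|f|_{\ker g}\|$, which combined with the trivial direction shows $\inf_r \|f - rg\| = \|f|_{\ker g}\|$. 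Since the left-hand side is at least $a$ by hypothesis, the conclusion $\|f|_{\ker g}\| \geq a$ follows. I expect the cleanest writeup to simply invoke the duality identity $\|f|_{\ker g}\| = \dist(f, \lin\{g\})$ directly, relegating the Hahn--Banach extension to a one-line justification.
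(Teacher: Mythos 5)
Your proof is correct and is essentially the paper's own argument: the paper likewise applies Hahn--Banach to extend $f|_{\ker g}$ to a norm-preserving functional $\tilde f$, notes that $f - \tilde f$ vanishes on $\ker g$ and hence equals $rg$, and concludes $\|f|_{\ker g}\| = \|\tilde f\| = \|f - rg\| \geq a$. Your packaging of this as the duality identity $\|f|_{\ker g}\| = \dist(f, \lin\{g\})$ is just the same Hahn--Banach step stated as a known formula, so there is no substantive difference.
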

\begin{proof}
By the Hahn-Banach Theorem there exists a linear functional $\tilde{f}$ whose restriction to $\ker g$ is the same as restriction of $f$ and its norm is equal to $||f|_{\ker g}||$. We can write $f - \tilde{f} = rg$ for some real $r$. Then
$$||f|_{\ker g}||=||\tilde{f}||=||f-rg|| \geq a.$$
\end{proof}

Finally we can move to the proof of our main result.


\emph{Proof of Theorem \ref{glowne}.}
We begin with introducing some notation. Let
$$\varepsilon_1=(m+\beta^{2p}-1)^{\frac{-1}{p}} \alpha^{4p+4m} 2^{-(8p+4m+6)} n^{-(4p+4m)}m^{-(4p+6m)}p^{-(2m+1)},$$
$$R_1 = 8 \sqrt{\varepsilon_1p}, \qquad K = \left ( \frac{R_1}{4} \right )^{\frac{1}{2p-1}},$$
$$\varepsilon_2=K^{2m} (m+2\beta^{2p}-2)^{\frac{-1}{p}} \alpha^{4p+4m} 2^{-(8p+4m+6)} n^{-(4p+4m)}m^{-(4p+6m)}p^{-(2m+1)},$$
$$R_2= 8 \sqrt{\varepsilon_2p}, \qquad L = \frac{R_2}{2^{2p}(2p-1)},$$
$$\varepsilon_3= m^{\frac{-1}{p}} L^{2m} K^{4p+2m}2^{-(4p+6)}n^{-(4p+4m)}m^{-(4p+6m)}p^{-(2m+1)},$$
$$R_3 = 8 \sqrt{\varepsilon_3p}.$$

Let $Y= \ker f$, where $||f||=1$ and suppose that $\lambda(Y, X) = 1 + \varepsilon$. We will show a stronger statement. We shall prove that
\begin{itemize}
\item if there exist $1 \leq k \leq m$ and $r_0 \in \mathbb{R}$ such that $||f_k+r_0f|| \leq K $, then $\varepsilon \geq \varepsilon_1$.
\item If there exists a pair $1 \leq k < l \leq m$ such that $||f_k + a_0f_l + r_0f|| \leq L$ for some $a_0, r_0 \in \mathbb{R}$, but $||f_i+rf|| > K$ for every $1 \leq i \leq m$ and every $r \in \mathbb{R}$, then $\varepsilon \geq \varepsilon_2$.
\item If $||f_i+rf|| > K$ for every $1 \leq i \leq m,\ r \in \mathbb{R}$ and $||f_i + af_j + rf|| > L$ for every $1 \leq i, j \leq m$, $i \neq j$, $a,\ r \in \mathbb{R}$, then $\varepsilon \geq \varepsilon_3$.
\end{itemize}

Conclusion of the theorem will then follow from the inequality $\varepsilon_3 \geq \varepsilon_0$ which can be verified with straightforward but a tedious computation.

Let $P:X \to Y$ be a projection such that $||P|| = \lambda(Y, X) = 1 + \varepsilon$ and suppose that $P(x) = x - f(x)w$ for some $w$ satisfying $f(w)=1$. Fix a nonzero vector $y \in Y$. We shall bound $|f_y(w)|$ in terms of $\varepsilon$, where $f_y$ is the unique functional such that $f_y(y)=||y||$ and $|f_y(x)| \leq ||x||$ for $x \in X$. Precisely, we shall prove that
\begin{equation} \label{funkcjonal}
|f_y(w)| \leq 8\sqrt{\varepsilon p}.
\end{equation}

Indeed, by Lemma \ref{projekcja} we have $|f_y(w)| \leq t_0(2+\varepsilon)$ for any $t_0 \in [0, 2]$ satisfying $\delta_{X^{\star}}(t_0) \geq \frac{\varepsilon}{2+2\varepsilon}$. Note that $X$ is clearly a subspace of $\ell_{2p}^m$ and therefore $X^{\star}$ is a quotient space of $\ell_{q}^{m}=(\ell_{2p}^m)^{\star}$, where $q = \frac{2p}{2p-1}$. Take $t_0 = 4\sqrt{\frac{\varepsilon p}{2+2\varepsilon}}$. If $\varepsilon \leq \varepsilon_1$ then, by looking at the expression defining $\varepsilon_1$, we can easily verify that $t_0 \in [0, 2]$. By combining Lemma \ref{modul} with Lemma \ref{modul2} we get
$$\delta_{X^{\star}}(t_0) = \delta_{X^{\star}} \left ( 4\sqrt{\frac{\varepsilon p}{2+2\varepsilon}} \right ) \geq  \delta_{\ell_q^m} \left ( 4\sqrt{\frac{\varepsilon p}{2+2\varepsilon}} \right ) \geq \frac{q-1}{8} \cdot \frac{16\varepsilon p}{2+2\varepsilon}$$
$$=\frac{2p}{2p-1} \cdot \frac{\varepsilon}{2+2\varepsilon} > \frac{\varepsilon}{2+2\varepsilon}.$$

We can therefore use Lemma \ref{projekcja} to obtain
$$|f_y(w)| \leq t_0(2+\varepsilon) = 4(2+\varepsilon)\sqrt{\frac{\varepsilon p}{2+2\varepsilon}}<4(2+2\varepsilon)\sqrt{\frac{\varepsilon p}{2+2\varepsilon}}$$
$$=4\sqrt{\varepsilon p(2+2\varepsilon)}<4\sqrt{4\varepsilon p}=8\sqrt{\varepsilon p},$$
as claimed.

Now we shall consider seperately each of the cases listed at the beginning of the proof. First suppose that there exist $1 \leq k \leq m$ and $r_0 \in \mathbb{R}$ such that $||f_k+r_0f|| \leq K$. We can assume that $k=m$. For the sake of contradiction let us suppose that $\varepsilon \leq \varepsilon_1$. From \eqref{funkcjonal} it follows that
$$|f_y(w)| \leq 8\sqrt{\varepsilon_1 p}=R_1.$$
Moreover, by Lemma \ref{blisko1} we have $||f_i+af_j +rf|| \geq \frac{\alpha}{2}$ for every $1\leq i, j \leq m-1$, $i \neq j$ and $a, r \in \mathbb{R}$. 
By applying Lemma \ref{funkcjonaly} and Lemma \ref{zawezenie} we can choose $y \in Y, ||y||=1$ such that 
\begin{equation} \label{igrek}
|f_i(y)| \geq \frac{\alpha}{2n(m-1)}
\end{equation}
for $1 \leq i \leq m-1$. Obviously $||f_i|| \leq 1$ for $1 \leq i \leq m-1$ and therefore $|f_i(y)| \leq 1$. Furthermore, since for $1 \leq i < j \leq m-1$ and $r \in \mathbb{R}$ we have
$$\left |\left | f_i - \frac{f_i(y)}{f_j(y)}f_j + rf \right |\right | \geq \frac{\alpha}{2}$$
it follows that
$$\left | \left |\frac{f_i}{f_i(y)} -\frac{f_j}{f_j(y)} + \frac{r}{f_i(y)}f \right | \right| \geq \frac{\alpha}{2|f_i(y)|} \geq \frac{\alpha}{2}.$$ 
Again by Lemma \ref{funkcjonaly} and Lemma \ref{zawezenie}, applied to the functionals of the form $\frac{f_i}{f_i(y)} -\frac{f_j}{f_j(y)}$, we can find $z \in Y,\ ||z||=1$ such that

\begin{equation}
\label{zet}
\left | \frac{f_i(z)}{f_i(y)} - \frac{f_j(z)}{f_j(y)} \right | \geq \frac{\alpha}{n(m-2)(m-1)}
\end{equation}
for every pair $1 \leq i < j \leq m-1$.

Now consider a polynomial $P(t)$ defined as
$$P(t) = \sum_{i=1}^{m-1} (f_i(y+tz))^{2p-1} \cdot f_i(w) = \sum_{i=1}^{m-1} (f_i(y) + tf_i(z))^{2p-1} \cdot f_i(w).$$
By the formula for the supporting functional given in Lemma \ref{supporting} it easily follows that
$$P(t) =  f_{y+tz}(w) \cdot ||y+tz||^{2p-1} - f_m(y+tz)^{2p-1} \cdot f_m(w).$$
By the previous part we have $|f_{y+tz}(w)| \leq R_1.$ Note also that since $||f_m+r_0f|| \leq K$ and $||y+tz|| \leq 2$ for $-1 \leq t \leq 1$ we get  
$$|f_m(y+tz)|=|f_m(y+tz)+r_0f(y+tz)| \leq 2 K.$$
If $x_0$ satisfies $||x_0||=1$ and $f(x_0)=1$, then $||w||=||x_0-P(x_0)|| \leq 2 + \varepsilon < 4$. Therefore, by combininig the estimation above with an observation $|f_m(w)| \leq ||w|| < 4$ we obtain the inequality
$$|P(t)| \leq 2^{2p-1}R_1 + 2^{2p+1}K^{2p-1} = 2^{2p-1}R_1 + 2^{2p-1}R_1 = 2^{2p}R_1.$$
for every $t \in [-1, 1]$. By the Markov inequality,
$$|P^{(k)}(0)| \leq 2^{2p}(2p-1)^2(2p-2)^2 \ldots (2p-k)^2R_1$$
for every $0 \leq k \leq m-2$.
On the other hand, a simple calculation shows that
$$|P^{(k)}(0)|=(2p-1)(2p-2) \ldots (2p-k)\sum_{i=1}^{m-1} f_i(y)^{2p-k-1}f_i(z)^k f_i(w).$$
In particular
$$ \left | \sum_{i=1}^{m-1} f_i(y)^{2p-k-1}f_i(z)^k f_i(w) \right | \leq 2^{2p} (2p)^{m-2} R_1.$$
for every $0 \leq k \leq m-2$.
If we denote by $A$ the Vandermonde matrix of the numbers $\left \{ \frac{f_i(z)}{f_i(y)} \right  \}_{i=1, 2, \ldots, m-1}$ and by $v$ we denote the vector $v=[f_i(y)^{2p-1}f_i(w)]_{i=1, 2, \ldots, m-1}$ then we have $||Av||_{\infty} \leq 2^{2p} (2p)^{m-2} R_1$. On the other hand, we obviously have $||Av||_{\infty} \geq \frac{||v||_{\infty}}{||A^{-1}||}.$ Thus, by using the upper bound on $||A^{-1}||$ given in Lemma \ref{vandermonde} combined with estimations \eqref{igrek} and \eqref{zet}, we obtain 
$$ \left (\frac{\alpha}{2n(m-1)}\right )^{2p-1} \cdot \max_{1 \leq i \leq m-1} |f_i(w)| \leq ||A^{-1}|| \cdot 2^{2p} (2p)^{m-2} R_1$$
$$\leq \left( 1 + \frac{2n(m-1)}{\alpha} \right )^{m-2} \left (\frac{n(m-1)(m-2)}{\alpha} \right )^{m-2} 2^{2p} (2p)^{m-2} R_1 .$$
Finally, from the inequality $1+\frac{2n(m-1)}{\alpha} < \frac{2nm}{\alpha}$ and similar crude upper bounds we conclude that
$$\max_{1 \leq i \leq m-1} |f_i(w)| < \alpha^{-(2p+2m)} 2^{4p+2m} n^{2p+2m}m^{2p+3m}p^m R_1 = (m+\beta^{2p}-1)^{\frac{-1}{2p}}.$$
As $||f||=1$ and $f(w)=1$ it is clear $||w|| \geq 1$. But on the other hand, taking into account the inequality $|f_m(w)| \leq \beta \max_{1 \leq i \leq m-1} |f_i(w)|$, we also have
$$||w||=\left ( \sum_{i=1}^{m} |f_i(w)|^{2p} \right )^{\frac{1}{2p}} < \left ( \frac{m-1}{m+\beta^{2p}-1} + \frac{\beta^{2p}}{m+\beta^{2p}-1} \right )^{\frac{1}{2p}} = 1.$$
We have obtained a contradiction which finishes the proof in the considered case.


Now we shall consider the case in which there exists a pair $1 \leq k < l \leq m$ such that $||f_k + a_0f_l + r_0f|| \leq L$ for some $a_0, r_0 \in \mathbb{R}$, but $||f_i+rf|| > K$ for every $1 \leq i \leq m$ and every $r \in \mathbb{R}$. We may assume that $k=m$ and $l=m-1$. In this case we shall reach a contradiction with an assumption that $\varepsilon \leq \varepsilon_2$. From \eqref{funkcjonal} follows that
$$|f_y(w)| \leq 8 \sqrt{\varepsilon_2p} = R_2.$$

From Lemma \ref{blisko1} it follows that 
$$||f_i + rf|| \geq \frac{\alpha}{2}$$
for $1 \leq i \leq m-1$. Since we also have $||f_{m-1}+rf|| > K$, according to Lemma \ref{funkcjonaly} we can choose $y \in \ker f$, $||y||=1$ such that
\begin{equation}
\label{funkcjonal2}
|f_i(y)| \geq \frac{\alpha}{2n(m-1)} \: (1 \leq i \leq m-2) \: \text{ and } \: |f_{m-1}(y)| \geq \frac{K}{n(m-1)}
\end{equation}
Let $1 \leq i, j \leq m-1$, $i \neq j$ and $a,\ r \in \mathbb{R}$. By Lemma \ref{blisko2} we have 
$$||f_i + af_j + rf|| \geq \frac{K\alpha}{2}$$
and therefore
$$\left |\left | f_i - \frac{f_i(y)}{f_j(y)}f_j + rf \right |\right | \geq \frac{K\alpha}{2},$$
so that
$$\left | \left |\frac{f_i}{f_i(y)} -\frac{f_j}{f_j(y)} + \frac{r}{f_i(y)}f \right | \right| \geq \frac{K\alpha}{2|f_i(y)|} \geq \frac{K\alpha}{2}.$$

Lemma \ref{funkcjonaly} combined with Lemma \ref{zawezenie} yields a vector $z \in \ker f$, $||z||=1$ such that 
\begin{equation}
\label{zet2}
\left | \frac{f_i(z)}{f_i(y)} - \frac{f_j(z)}{f_j(y)} \right | \geq \frac{K\alpha}{nm(m-1)},
\end{equation}
for every $1 \leq i < j \leq m-1$.
Similarly like before we consider the polynomial $P(t)$ defined as
$$P(t) = \sum_{i=1}^{m-1} (f_i(y+tz))^{2p-1} \cdot f_i(w) - a_0^{2p-1}(f_{m-1}(y+tz))^{2p-1} \cdot f_m(w)$$
$$= \sum_{i=1}^{m-1} (f_i(y) + tf_i(z))^{2p-1} \cdot f_i(w) - a_0^{2p-1}(f_{m-1}(y)+tf_{m-1}(z)))^{2p-1} \cdot f_m(w)$$
$$= \sum_{i=1}^{m-2} (f_i(y) + tf_i(z))^{2p-1} \cdot f_i(w) + (f_{m-1}(y) + tf_{m-1}(z))^{2p-1}(f_{m-1}(w) - a_0^{2p-1}f_{m}(w))$$

Note that
$$|f_m(y+tz) + a_0f_{m-1}(y+tz)|=|f_m(y+tz) + a_0f_{m-1}(y+tz)+r_0f(y+tz)| \leq L ||y+tz||.$$
Since $|f_m(y+tz)| \leq ||y+tz||$ we also have $||a_0f_{m-1}(y+tz)|| \leq ||y+tz|| (1 + L).$ Therefore
$$\left | P(t) -f_{y+tz}(w) \cdot ||y+tz||^{2p-1} \right | = \left | a_0^{2p-1}(f_{m-1}(y+tz))^{2p-1} + (f_{m}(y+tz))^{2p-1} \right | \cdot |f_m(w)|$$
$$=\left | (f_m(y+tz)+a_0f_{m-1}(y+tz))\left ( f_m(y+tz)^{2p-2} - \ldots +  a^{2p-2}_0f_{m-1}(y+tz)^{2p-2} \right) \right| \cdot |f_m(w)|$$
$$\leq (2p-1) L(1+L)^{2p-2} |f_m(w) ||y+tz||^{2p-1}.$$
In the previous part we have proved that $||w|| \leq 4$ and hence $|f_m(w)| \leq 4$. Thus we obtain an upper bound on $|P(t)|$ for $t \in [-1, 1]$
$$|P(t)| \leq |f_{y+tz}(w)| \cdot ||y+tz||^{2p-1} +  4(2p-1)L(1+L)^{2p-2} ||y+tz||^{2p-1}$$
$$\leq 2^{2p-1}R_2 + 2^{4p-1}(2p-1) L = 2^{2p-1}R_2 + 2^{2p-1}R_2 = 2^{2p}R_2.$$

Now we can follow the same idea as before of estimating the norm of the inverse of the Vandermonde matrix combined with the inequalities \eqref{funkcjonal2} and \eqref{zet2} to conclude that
$$ \left (\frac{\alpha}{2n(m-1)}\right )^{2p-1} \cdot \max_{1 \leq i \leq m-2} |f_i(w)| \leq ||A^{-1}|| \cdot 2^{2p} (2p)^{m-2} R_2$$
$$\leq \left( 1 + \frac{2n(m-1)}{\alpha} \right )^{m-3} \cdot \left( 1 + \frac{2n(m-1)}{K} \right ) \cdot \left (\frac{n(m-2)(m-1)}{K\alpha} \right )^{m-2} 2^{2p} (2p)^{m-2} R_2.$$
Hence
$$\max_{1 \leq i \leq m-2} |f_i(w)| < K^{-m} \alpha^{-(2p+2m)} 2^{4p+2m} n^{2p+2m}m^{2p+3m}p^m R_2 = (m+2\beta^{2p}-2)^{\frac{-1}{2p}}.$$
Now we can reach a contradiction in the same way as in the previous case as
$$||w||=\left ( \sum_{i=1}^{m} |f_i(w)|^{2p} \right )^{\frac{1}{2p}} < \left ( \frac{m-2}{m+2\beta^{2p}-2} + \frac{2\beta^{2p}}{m+2\beta^{2p}-2} \right )^{\frac{1}{2p}} = 1.$$

We move to the last part of the proof. In the remaining case we assume that $||f_i+rf|| > K$ for every $1 \leq i \leq m,\ r \in \mathbb{R}$ and
$$||f_i + af_j + rf|| > L$$
for every $1 \leq i, j \leq m$, $i \neq j$, $a,\ r \in \mathbb{R}$. For the sake of contradiction we also suppose that $\varepsilon \leq \varepsilon_3$. Then
$$|f_y(w)| \leq 8 \sqrt{\varepsilon_3p} = R_3.$$

Using the same reasoning as before, this time simply to the polynomial
$$P(t) = \sum_{i=1}^{m} f_i(y+tz)^{2p-1} \cdot f_i(w)$$
for normed $y,\ z \in \ker f$ satisfying
$$|f_i(y)| \geq \frac{K}{nm},$$
for $1 \leq i \leq m$ and
$$\left | \frac{f_i(z)}{f_i(y)} - \frac{f_j(z)}{f_j(y)} \right | \geq \frac{2L}{nm(m-1)},$$
for every $1 \leq i < j \leq m$, we easily obtain the inequality
$$ \left (\frac{K}{nm}\right )^{2p-1} \cdot \max_{1 \leq i \leq m} |f_i(w)| \leq \left( 1 + \frac{nm}{K} \right )^{m-1} \left (\frac{nm(m-1)}{2L} \right )^{m-1} 2^{2p-1} (2p)^{m-1} R_3,$$
which gives us
$$\max_{1 \leq i \leq m} |f_i(w)| < L^{-m} K^{-(2p+m)}2^{2p}n^{2p+2m}m^{2p+3m}p^mR_3 = m^{\frac{-1}{2p}}.$$
We can again bound the norm of $w$ to get
$$ 1 \leq ||w|| = \left ( \sum_{i=1}^{m} |f_i(w)|^{2p} \right )^{\frac{1}{2p}} < \left ( \sum_{i=1}^{m} \frac{1}{m} \right )^{\frac{1}{2p}}=1.$$
We have obtained a contradiction that completes the last step of the proof.
\qed

\section{Proof of Corollary \ref{wniosek}}
\label{sekcjawniosek}

In this section we apply Theorem \ref{glowne} to establish Corollary \ref{wniosek}

\emph{Proof of Corollary \ref{wniosek}}
We will use Theorem \ref{glowne} for explicit functionals $f_1, f_2, \ldots, f_m$. Let $X=(\mathbb{R}^n, || \cdot ||)$ where the norm $|| \cdot ||$ is defined as in Theorem \ref{glowne} with $m=n+2$, $p=\lceil \frac{n+2}{2} \rceil$, $f_i(x) = x_i$ for $1 \leq i \leq n$, $f_{n+1}(x) = x_1 + x_2 + \ldots + x_n$ and $f_{n+2}(x) = \frac{x_1 + 2x_2 + \ldots + nx_n}{n}.$ We shall estimate the parameters $\alpha$ and $\beta$ of Theorem \ref{glowne} for such a choice of functionals. It is straightforward to do, albeit requires consideration of many cases.

First we shall prove that $\alpha \geq \frac{1}{2n}$, that is $\dist(f_i, \lin\{f_j, f_k, f_l\}) \geq \frac{1}{2n}$ for every $0 \leq j < k < l \leq n+2$, $0 \leq i \leq n+2$, $i \not \in \{j, k, l\}$. Note that for every vector $v \neq 0$ such that $f_j(v)=f_k(v)=f_l(v) = 0$ we have 
$$\dist(f_i,\ \lin\{f_j, f_k, f_l\}) \geq \frac{|f_i(v)|}{||v||}.$$
For different indices $i,\ j,\ k,\ l$ we shall use different vectors $v$ to get the desired lower bound. Suppose that
\begin{itemize}
\item $i, j, k, l \leq n$. Take $v=e(i)$. Then $||v||=\left ( 2+\frac{i^{2p}}{n^{2p}} \right )^{\frac{1}{2p}} \leq 2$ and $f_i(v)=1$. Therefore the distance is at least $\frac{1}{2}$.
\item $i, j, k \leq n$ and $l=n+1$. As $n \geq 4$ we can pick $s \in \{1, 2, \ldots, n \} \setminus \{i, j, k\}$. Take $v = e(i)-e(s)$. Then $||v|| \leq 4$ and $f_i(v)=1$. The distance is at least $\frac{1}{4}$.
\item $i, j, k \leq n$ and $l=n+2$. Pick $s \in \{1, 2, \ldots, n \} \setminus \{i, j, k\}$ and $v = se(i) - ie(s)$. Then $||v|| = \left ( s^{2p} + i^{2p} + \frac{|s-i|^{2p}}{n^{2p}}\right )^{\frac{1}{2p}} \leq 2n$ and $f_i(v)=s \geq 1$. The distance is at least $\frac{1}{2n}$.
\item $i, j \leq n$, $k=n+1$ and $l=n+2$. Pick distinct $s_1, s_2 \in \{1, 2, \ldots, n \} \setminus \{i, j\}$ and $v = e(i) + \frac{i-s_2}{s_2-s_1}e(s_1) + \frac{s_1-i}{s_2-s_1}e(s_2)$. Then $||v|| = \left ( 1 + \frac{(i-s_1)^{2p} + (i-s_2)^{2p}}{(s_1-s_2)^{2p}} \right )^{\frac{1}{2p}}  \leq \left ( 1 + 2n^{2p} \right )^{\frac{1}{2p}} \leq 2n$ and $f_i(v)=1$. The distance is at least $\frac{1}{2n}$.
\item $i=n+1$, $j, k, l \leq n$. Pick $s \in \{1, 2, \ldots, n \} \setminus \{j, k, l\}$ and $v = e(s)$. Then $||v|| \leq 2$ and $f_i(v)=1$. The distance is at least $\frac{1}{2}$.
\item $i=n+1$, $j, k \leq n$ and $l=n+2$. Pick distinct $s_1, s_2 \in \{1, 2, \ldots, n \} \setminus \{j, k\}$ and $v = s_2e(s_1) - s_1e(s_2)$. Then $||v|| \leq 2n$ and $|f_i(v)|=|s_1-s_2| \geq 1$. The distance is at least $\frac{1}{2n}$.
\item $i=n+2$, $j, k, l \leq n$. Pick $s \in \{1, 2, \ldots, n \} \setminus \{j, k, l\}$ and $v = e(s)$. Then $||v|| \leq 2$ and $f_i(v)=\frac{s}{n} \geq \frac{1}{n}$. The distance is at least $\frac{1}{2n}$.
\item $i=n+2$, $j, k \leq n$ and $l=n+1$. Pick distinct $s_1, s_2 \in \{1, 2, \ldots, n \} \setminus \{j, k\}$ and $v = e(s_1) - e(s_2)$. Then $||v|| \leq 4$ and $|f_i(v)|=\frac{|s^2_1-s^2_2|}{n} \geq \frac{2}{n}$. The distance is at least $\frac{1}{2n}$.
\end{itemize}	

We have thus established that $\alpha \geq \frac{1}{2n}$. In a similar manner we will now upper bound the parameter $\beta$ by $n^2$. In other words, we shall prove that for $0 \leq j < k \leq n+2$ and $x \in \mathbb{R}^n$ we have
$$\max\{|f_j(x)|, |f_k(x)|\} \leq n^2 \max_{1 \leq i \leq m, i \not \in \{j, k\}} |f_i(x)|.$$
We will do this by writing each functional $f_i(x)$ as a linear combination of every $n$ of the remaining ones with the sum of absolute values of coefficients not exceeding $n^2$. In fact, suppose that
\begin{itemize}
\item $j=n+1$ and $k=n+2$. Then $f_{n+1} = \sum_{i=1}^{n} f_i$ and $f_{n+2} = \sum_{i=1}^{n} if_i$.
\item $j \leq n$ and $k=n+2$. Then $f_j = f_{n+1} - \sum_{1 \leq i \leq n, i \neq j} f_i$ and $f_{n+2} =\frac{1}{n} \left ( jf_{n+1} + \sum_{i=1}^{n} (i-j)f_i \right ) $. 
\item $j \leq n$ and $k=n+1$. Then $f_j =\frac{n}{j}f_{n+2} - \sum_{1 \leq i \leq n, i \neq j} \frac{i}{j} f_i$ and $f_{n+1} = \frac{n}{j}f_{n+2} - \sum_{1 \leq i \leq n, i \neq j} \left ( \frac{i}{j} - 1 \right ) f_i$.
\item $j < k \leq n$. Then $f_j = \frac{n}{j} f_{n+2} - \frac{k}{j} f_{n+1} + \sum_{1 \leq i \leq n, i \neq j} \left ( \frac{k}{j} - \frac{i}{j} \right ) f_i$ and similarly $f_k = \frac{n}{k} f_{n+2} - \frac{j}{k} f_{n+1} + \sum_{1 \leq i \leq n, i \neq k} \left ( \frac{j}{k} - \frac{i}{k} \right ) f_i$.
\end{itemize}
It is straightforward to check that in each of linear combinations listed above the sum of absolute values of coefficients does not exceed $n^2$. This proves our claim.

To finish the proof it is enough to see that in our case we have $\alpha^{-1} \leq 2(n+3)$ and $2p \leq n+3$. Moreover $m+2\beta^{2p} \leq n+2 + 2n^{2(n+3)}$ and we can check by hand that
$$n+2 + 2n^{2(n+3)} \leq (n+3)^{2(n+3)},$$
and thus
$$(n+2 + 2n^{2(n+3)})^7 \leq (n+3)^{14(n+3)} \leq (n+3)^{2(n+3)^2}$$
since $n \geq 4$. Therefore a straightforward bound yields
$$\lambda(Y, X) > 1 + (n+3)^{-2(n+3)}  \left ((n+3)^{-6} 2^{-16} (n+3)^{-3} (n+3)^{-11} (n+3)^{-4} \right )^{6(n+3)^2}$$
$$>1 + \left (8 \left ( n + 3 \right )^{5} \right )^{-30(n+3)^2},$$
for an arbitrary hyperplane $Y \subset X$ and the conclusion follows.

\qed

\section{Concluding remarks}
\label{sekcjauwagi}

In the preceeding sections we established a quantitative lower bound on relative projection constant for hyperplanes of subspaces of $\ell_{2p}^m$ spaces. In particular, we proved an existence of an $n$-dimensional normed space which every projection onto hyperplane has norm at least $1 + \left (8 \left ( n + 3 \right )^{5} \right )^{-30(n+3)^2} > 1 + \exp(-C n^2 \log n)$. It is reasonable to conjecture that both of this estimations could be significantly improved.

\begin{problem}
\label{problem1}
Improve lower bound given in Theorem \ref{glowne} for hyperplanes of $\ell_{2p}^m$ spaces. Give any non-trivial estimation in the three-dimensional case.
\end{problem}

Clearly our result can be improved, as in many places we have used some crude bounds and sacrified precision of the estimation for a clarity of the reasoning. We believe however, that with some more efficient ideas it is possible to obtain a lower bound of a much better order.

\begin{problem}
\label{problem2}
Improve lower bound given in Corollary \ref{wniosek}. Give any non-trivial estimation in the three-dimensional case. Is it true that there exists $c>0$ such that for every $n \geq 3$ there exists an $n$-dimensional normed space $X$ satisfying $\lambda(Y, X) > 1+c$ for every hyperplane $Y \subset X$?
\end{problem}

In this problem one can suspect that there is even more room for improvement. We believe that our techniques could be used for a lot of other spaces as well. The two important elements: modulus of convexity of the dual and form of the supporting functional are determined for many classes of normed spaces. It is possible that some better estimate could be obtained for subspaces of some Orlicz-Musielak spaces, which generalize $\ell_p^m$ spaces in a very practical way. Probabilistic constructions also seem to be quite promising way to approach, even if they usually work in the asymptotic setting.

\begin{problem}
Give analogues of Theorem \ref{glowne} and Corollary \ref{wniosek} in the setting of an arbitrary subspace $Y \subset X$ such that $2 \leq \dim Y \leq \dim X - 1$.
\end{problem}

The problem above just rephrases the original question of Bosznay and Garay. We feel that with some additional work, methods presented in the paper could be refined to yield a lower bound for an arbitrary subspace.

We conclude the paper with the problem of discrete geometry originating from Section \ref{sekcjafunkcjonaly}.

\begin{problem}
Let $m,\ n \geq 1$ be integers. Consider a norm $|| \cdot || \in \mathbb{R}^n$ and collection of normed linear functionals $f_1, f_2, \ldots, f_m: \mathbb{R}^n \to \mathbb{R}$. Provide some estimates of $\max_{||x||=1} \min_{1 \leq i \leq m} |f_i(x)|$.
\end{problem}

The problem is formulated in a general way but we can propose some specific variations, all of them seeming to be non-trivial. First of all we can fix the norm $|| \cdot ||$ to be specific (for example some $\ell_p$-norm) and ask for a best possible lower bound on the considered quantity. Usually it will be probably extremely hard to give a closed formula for arbitrary $m,\ n$ but here again we have some possibilities. For example, we can fix $m$ and let $n \to \infty$ and determine the asymptotics. Or vice versa. Perhaps even in the cases of small $m$ and arbitrary $n$ the problem can be challenging. Moreover, we can let norm $|| \cdot ||$ not to be fixed and and try to find best possible lower bound for an arbitrary norm. Here again we have different possibilites for $m$ and $n$.

Some of the proposed variations may have been already considered in the literature, but it seems that problems of this kind can make an interesting and broad area of further research.

\end{document}